\numberwithin{equation}{section}
\newcommand{\R}{\mathbb{R}}
\newcommand{\N}{\mathbb{N}}
\newcommand{\E}{\mathbb{E}}
\renewcommand{\P}{\mathscr{P}}
\newcommand{\F}{\mathcal{F}}
\newtheorem{thm}{Theorem}[section]
\newtheorem{lem}[thm]{Lemma}
\newtheorem{rem}[thm]{Remark}
\newtheorem{example}[thm]{Example}
\newtheorem{assumption}[thm]{Assumption}
\begin{document}
\title{
Explicit modified Euler 
approximations of the A\"{i}t-Sahalia type model with Poisson jumps
\thanks{
This work was supported by the Natural Science Foundation of China (12471394, 12071488, 12371417),
the Fundamental Research Funds for the Central Universities of Central South University (2024ZZTS0307).
}
}
\author[a]{Yingsong Jiang\protect\footnotemark[4]}
\author[b,c]{Ruishu Liu\thanks{Corresponding author:
rsliu@szu.edu.cn
}}
\author[a]{Minhong Xu\protect\footnotemark[4]}
\affil[a]{\footnotesize School of Mathematics and Statistics, Hunan Research Center of the Basic Discipline for Analytical  
\\
\footnotesize Mathematics, HNP-LAMA, Central South University, Changsha 410083, Hunan, China}
\affil[b]{School of Artificial Intelligence, Shenzhen University, China,}
\affil[c]{National Engineering Laboratory for Big Data System Computing Technology,
Shenzhen University, China,}

\date{ }
\maketitle

\footnotetext[4]{Contributing author(s):
\begin{minipage}[t]{\linewidth}
    yingsong@csu.edu.cn,
    13476813022@163.com.
\end{minipage}
}

\begin{abstract}
This paper focuses on mean-square approximations of a generalized A\"it-Sahalia interest rate model with Poisson jumps. The main challenge in the construction and analysis of time-discrete numerical schemes is caused by a drift that blows up at the origin, highly nonlinear drift and diffusion coefficients and positivity-preserving requirement. Due to the presence of the Poisson jumps, additional difficulties arise in recovering the exact order $1/2$  of convergence for the time-stepping schemes. By incorporating implicitness in the term $\alpha_{-1}x^{-1} $ and introducing
    the modifications functions $f_h$ and $g_h$ in the recursion, a novel explicit Euler-type scheme is proposed, which is easy to implement and preserves the positivity of the original model unconditionally, i.e., for any time step-size $h>0$. 
    A mean-square convergence rate of order $1/2$ is established for the proposed scheme in both the non-critical and general critical cases. Finally, numerical experiments are provided to confirm the theoretical findings. 

    \par\textbf{keywords: } A\"{i}t-Sahalia model with Poisson jumps; Unconditionally positivity preserving; Explicit Euler-type scheme; Mean-square convergence rate
\end{abstract}

\section{Introduction}

Stochastic differential equations (SDEs) are extensively utilized across various scientific disciplines to model real-world phenomena influenced by random noise.
For example, in the field of mathematical finance, SDEs play a vital role in capturing the dynamics of financial systems. Prominent examples include the Black-Scholes model, the Cox-Ingersoll-Ross (CIR) model, and the A\"it-Sahalia model, which are foundational to modern financial theory.
Moreover, to accurately model event-driven phenomena, it is essential to incorporate SDEs with Poisson jumps  \cite{cont2003financial, platen2010numerical}. For instance, stock price movements can be significantly impacted by sudden, unpredictable events such as market crashes, central bank announcements, or changes in credit ratings. Over the past decade, SDEs with jumps have gained increasing popularity for modeling market fluctuations, particularly in the contexts of risk management and option pricing \cite{cont2003financial}. Given that analytical solutions for nonlinear SDEs with jumps are seldom available, numerical approximations have become an indispensable tool for understanding the behavior of these complex systems. This has motivated a substantial body of research dedicated to exploring this intriguing and challenging topic (see, e.g., \cite{yang2024strong,chen2020convergence,Chen_Gan_Wang:SIS_JCAM21,dareiotis2016tamed,deng2019truncated,higham2005numerical,higham2006convergence,higham2007strong,kaluza2018optimal,kumar2016tamed,li2019positivity,platen2010numerical,przybylowicz2016optimal,wang2010compensated,yang2017transformed}).

Throughout this paper we study
the generalized A\"it-Sahalia interest rate model with Poisson jumps, given by:
\begin{equation}\label{eq:ait-sahalia_general[0.5AS25]}
      \mathrm{d} X_t 
     =
     ( \alpha_{-1} X_t^{-1} - \alpha_0 + \alpha_1 X_t - \alpha_2 X_t^r ) \, \mathrm{d}t 
     +
     \sigma X_t^{\rho} \, \mathrm{d} W_t
     +
     \nu( X_{t^-} ) \, \mathrm{d} N_t
     ,
     \ t >0, \quad  X_0 = x_0 > 0,  
\end{equation}
where $ \alpha_{-1}, \alpha_0, \alpha_1, \alpha_2 >0$, $r, \rho >1$
such that $r+1 \geq 2\rho$, 
$\nu : \R \rightarrow \R$, and $X_{t^-} := \lim_{r \rightarrow t^-} X_r$. 
Here, $\{W_t\}_{t \in [0, \infty)}$ denotes a standard Brownian motion and $\{N_t\}_{t \in [0, \infty)}$ a Poisson process with jump intensity $\lambda >0$,
both defined on a filtered probability space $(\Omega, \mathcal{F}, \mathbb{P}, \{\mathcal{F}_t\}_{t \geq 0})$ with respect to the normal filtration $\{\mathcal{F}_t\}_{t \geq 0}$. 
We further assume that
the Brownian motion and the Poisson process are independent. 
The compensated Poisson process is defined as $\tilde{N}_t = N_t - \lambda t$ for $t \in [0, \infty)$, which is a martingale with respect to the normal filtration $\{\mathcal{F}_t\}_{t \geq 0}$. 
When $\nu \equiv 0$, the model reduces to the generalized A\"it-Sahalia interest rate model (without jumps), which has been intensively studied (see, e.g., \cite{szpruch2011numerical,higham2002strong,neuenkirch2014first,wang2020mean,liu2025unconditionally,jiang2024unconditionally}).
Under suitable parameter conditions,
the model (\ref{eq:ait-sahalia_general[0.5AS25]}) admits a unique positive solution.
Preserving such positivity is therefore critical for designing numerical schemes.
Indeed, numerous nonlinear SDE models arising in finance possess not only non-globally Lipschitz coefficients but also positive constraints, which has attracted significant interest in developing positivity-preserving numerical methods.
The standard Euler–Maruyama (EM) method, despite its ease of implementation and widespread use \cite{kloeden1992numerical,HK_book}, often fails in such settings.
It diverges in the strong sense owing to the super‑linear coefficients \cite{hutzenthaler2011strong},
and can not preserve positivity.
To address these limitations, 
the implicit methods \cite{higham2002strong,wang2020mean,wang2023mean,zhao2021backward} and modified explicit methods \cite{hutzenthaler2012strong,mao2015truncated,hutzenthaler2020perturbation,cai2022positivity,yang2024strong,yi2021positivity,liu2023higher,jentzen2009pathwise,li2019explicit}
have been developed.
While implicit methods naturally preserve positivity,
they require solving an implicit algebraic equation in each step, which increases computational cost. 
To improve efficiency, 
various modified explicit methods have been proposed, often relying on taming, projection, or truncation techniques.
Among these, tamed schemes effectively control super-linear growth but do not guarantee positivity. In contrast, projection and truncation methods preserve positivity by modifying the numerical solution; however, they may introduce bias and complicate the convergence analysis.

Despite significant progress, 
numerical methods for the A\"it-Sahalia interest rate model with Poisson jumps are less explored.
Compared to the generalized A\"it-Sahalia interest rate model (without jumps), 
the Poisson process brings
additional difficulties that arise in recovering the exact convergence order for the time-stepping schemes. 
In the presence of a linear jump function $\nu$,
Deng et al. \cite{deng2019generalized} studied the analytical properties of the model (\ref{eq:ait-sahalia_general[0.5AS25]}) and applied
the Euler-Maruyama method to this model, proving convergence in probability but without quantifying the convergence rate. 
Recently, Zhao et al. \cite{zhao2021backward} employed the backward Euler method (BEM) to (\ref{eq:ait-sahalia_general[0.5AS25]}),
obtaining a mean-square strong convergence rate of order $1/2$ under the condition $\gamma + 1 \geq 2\rho$,
with some restrictions imposed on the time step size $h$.

In this paper, we aim to propose new explicit schemes 
for approximating the solution to \eqref{eq:ait-sahalia_general[0.5AS25]}, which is strongly convergent and positivity preserving in both the non-critical and critical cases.
Given the interval $[0, T]$, we construct a uniform mesh with time step size $h = \frac{T}{N}$, where $N$ is a positive integer. 
For $ n \in \{0,1,2,...,N-1\}$, the proposed time stepping scheme is defined by
\begin{equation}\label{eq:numer._in_intro.[0.5AS25]}
        Y_{n+1}
       =
       Y_n
       +  (\alpha_{-1} Y_{n+1}^{-1} - \alpha_0 + \alpha_1 Y_n + f_h (Y_n)    ) h
       + g_h (Y_n)  \Delta W_n 
       + \nu( Y_n ) \Delta N_n,
\end{equation}
with $Y_0 = x_0$. 
Here, the terms $\Delta W_n = W_{t_{n+1}} - W_{t_n}$ and $\Delta N_n = N_{t_{n+1}} - N_{t_n}$ represent the increments of the Wiener process and Poisson process, respectively.
Specifically, $f_h(x)$ and $g_h(x)$ are step-size-dependent modifications of $f(x) := -\alpha_2 x^r$ and $g(x) := \sigma x^\rho$ satisfying Assumption~\ref{assump:modification_f-g[0.5AS25]}.
These modifications are designed to address the challenges posed by the polynomial growth of the coefficients.
Two concrete examples are provided in Example~\ref{example1} and Example~\ref{example2[0.5AS25]}.
Notably, the proposed Euler-type scheme is actually explicit, and unconditionally preserves positivity for any time step size $h > 0$ (see Lemma \ref{lem:wellposed_of_Numerical method[0.5AS25]}).
Inspired by \cite{liu2025unconditionally},
the implicitness is solely present in the term $\alpha_{-1} Y_{n+1}^{-1} h$, which plays a crucial role in preserving positivity.
Given the previous step $Y_n$, the current step $Y_{n+1}$
is computed as the unique positive root
of a quadratic equation (see \eqref{eq:solution_of_scheme} for the explicit solution).
This approach significantly reduces computational costs compared to implicit schemes like BEM reported in \cite{zhao2021backward},
and allows for efficient implementation.

To the best of our knowledge, this is the first work to develop an explicit scheme that unconditionally preserves positivity and achieves a mean-square convergence rate of $1/2$ for the general A\"{i}t-Sahalia model with Poisson jumps \eqref{eq:ait-sahalia_general[0.5AS25]} in both non-critical and general critical cases.
We also mention that,
by developing a novel framework of correction functions $f_h$ and $g_h$ that address polynomially growing coefficients, 
the present work thereby simplifies the strong convergence analysis and avoids the requirement for any a priori moment bounds of the numerical schemes.

The rest of this paper is organized as follows. The next section provides some preliminaries for the considered model. In Section \ref{section:scheme[0.5AS25]}, we present the numerical method and its unconditionally positivity preserving property, along with two concrete schemes presented to meet the conditions of the proposed method.
The mean-square convergence analysis is conducted in Section \ref{section:error analysis[0.5AS25]}, where the convergence rate is established by Theorem \ref{thm(revised_last):Order_onehalf[0.5AS25]}. Numerical experiments are carried out in Section \ref{section:numericalexperiments[0.5AS25]} to validate the theoretical results.

\section{Preliminaries}\label{section:preliminaries[0.5AS25]}
Let $\N$ be the set of all positive integers.
By $| \cdot |$ we denote the Euclidean norm in $\R$.
Given $T \in (0, \infty)$ and
a filtered probability space $(\Omega, \F, \{ \F_t \}_{t\in [0,T]}, \mathbb{P} )$, we use $\E$ to represent the expectation and $L^p(\Omega;\R), p > 0$ to represent the space of all $\R$-valued random variables $\eta$ satisfing $\E[| \eta |^p] < \infty $, equipped with the norm $\| \cdot \|_{ L^p(\Omega;\R) }$ defined by:
\begin{equation}
    \| \eta \|_{ L^p(\Omega;\R) } := ( \E[ | \eta |^p ] )^{ \frac{1}{p} }, \quad \text{for all }\eta \in L^p(\Omega;\R), \ 
    p > 0.
\end{equation}

This paper considers
the A\"{i}t-Sahalia type model with Poisson jumps of the form
\begin{equation}\label{eq:ait-sahalia[0.5AS25]}
      \mathrm{d} X_t = ( \alpha_{-1} X_t^{-1} - \alpha_0 + \alpha_1 X_t + f( X_t ) ) \, \mathrm{d}t + g( X_t ) \, \mathrm{d} W_t + \nu(X_{t^-}) \, \mathrm{d}N_t, \ t > 0, \quad  X_0 = x_0 > 0,
\end{equation}
where for short we denote 
\begin{equation}\label{eq:f_and_g[0.5AS25]}
    f(x) := - \alpha_2 x^r , 
    \ \
    g(x) := \sigma x^{\rho}, 
    \quad
     x \in \R,
\end{equation}
with the parameters $\alpha_{-1}, \alpha_0, \alpha_1, \alpha_2, \sigma > 0$ and $r,\rho > 1$.
We additionally assume that the
coefficient function \(\nu \colon \mathbb{R} \rightarrow \mathbb{R}\) in (\ref{eq:ait-sahalia[0.5AS25]}) satisfies
\begin{equation}\label{eq:nu_assmption1[0.5AS25]}
|\nu(x) - \nu(y)| \leq M|x - y|, \quad \text{for all } x, y \in D, 
\end{equation}
and
\begin{equation}\label{eq:nu_assmption2[0.5AS25]}
x + \nu(x) > m \cdot \min\{1, x\}, \quad \text{for all } x \in D,
\end{equation}
for some constants \(M , m > 0\).
It is also evident from inequality \eqref{eq:nu_assmption1[0.5AS25]} that
\begin{equation}\label{eq:nu_assmption3[0.5AS25]}
|\nu(x)| \leq C(1 + |x|), \quad \text{for all } x \in D.
\end{equation}
Here, and throughout, the letter \(C\) denotes a generic positive constant, which is independent of the step-size and may vary at different appearances.
Besides, a key component in our analysis is the compensated Poisson process, defined as 
\begin{equation}
    \tilde{N}_t := N_t - \lambda t , \quad
    t \in [0, \infty),
\end{equation}
which is a martingale with respect to the normal filtration $\{\mathcal{F}_t\}_{t \geq 0}$.

Under the above settings,
the well-posedness of the A\"{i}t-Sahalia type model with Poisson jumps \eqref{eq:ait-sahalia[0.5AS25]} is established in the following lemma, quoted directly from
\cite[Proposition 1]{zhao2021backward}.
\begin{lem}\label{wellposed of exact solution[0.5AS25]}  
Suppose the conditions \eqref{eq:nu_assmption1[0.5AS25]}-\eqref{eq:nu_assmption2[0.5AS25]} hold. 
For any given initial data $X_0 = x_0 >0$, 
the model \eqref{eq:ait-sahalia[0.5AS25]} admits a unique positive $\{\mathcal{F}_t\}_{t \geq 0}$-adapted global solution $\{X(t)\}_{t \in [0,T]}$.
\end{lem}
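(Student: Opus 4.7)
The plan is to combine the classical localization/truncation argument for SDEs with jumps with a Lyapunov estimate that simultaneously controls excursions of $X$ toward $0$ and toward $+\infty$. On every compact subset of $(0,\infty)$ the drift $\alpha_{-1}x^{-1}-\alpha_0+\alpha_1 x - \alpha_2 x^r$, the diffusion $\sigma x^\rho$, and the jump coefficient $\nu$ (Lipschitz by \eqref{eq:nu_assmption1[0.5AS25]}) are all locally Lipschitz. Standard existence theory for jump SDEs (a Picard iteration on each localized interval, e.g.) therefore furnishes a unique strong $\{\F_t\}$-adapted solution up to the explosion time
\[
\tau_\infty := \lim_{n\to\infty} \tau_n, \qquad \tau_n := \inf\{t\ge 0 : X_t \notin (1/n,\,n)\},
\]
with $X_{\cdot\wedge\tau_n}$ taking values in $(1/n,n)$. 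The remaining task is to prove $\mathbb{P}(\tau_\infty=\infty)=1$; this simultaneously rules out any passage of $X$ through $0$ in finite time and therefore yields positivity.

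To this end I would introduce a Lyapunov function of the form $V(x)=x^p+x^{-q}$ with small $p,q>0$. Applying It\^o's formula for semimartingales with jumps to $V(X_{t\wedge\tau_n})$ produces: (i) a continuous drift contribution, (ii) a local-martingale Brownian part, (iii) a compensator $\lambda[V(X_{s^-}+\nu(X_{s^-}))-V(X_{s^-})]\,\mathrm{d}s$ from the jumps, and (iv) a compensated-Poisson martingale. Assumption \eqref{eq:nu_assmption2[0.5AS25]} guarantees $X_{s^-}+\nu(X_{s^-})\ge m\min\{1,X_{s^-}\}>0$, so the post-jump state is uniformly bounded away from $0$; combined with the linear growth \eqref{eq:nu_assmption3[0.5AS25]}, this gives $V(X_{s^-}+\nu(X_{s^-}))\le C(1+V(X_{s^-}))$, so the jump compensator contributes at most $C_1V(X_{s^-})+C_2$. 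For the continuous part, the negative term $-\alpha_2 p\,x^{p+r-1}$ dominates the diffusion contribution $\tfrac12\sigma^2 p(p-1)x^{p+2\rho-2}$ for large $x$ precisely because $r+1\ge 2\rho$, while near zero the $\alpha_{-1} q\,x^{-q-2}$ term dominates the diffusion contribution $\tfrac12\sigma^2 q(q+1)x^{2\rho-q-2}$. Collecting everything yields $\mathcal{L}V(x)\le C_3 V(x)+C_4$ on $(0,\infty)$.

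Taking expectations, localizing at $\tau_n$, and applying Gronwall's inequality then delivers $\sup_{t\in[0,T]}\mathbb{E}[V(X_{t\wedge\tau_n})]\le K_T$, a constant independent of $n$. Because $V(1/n)\wedge V(n)\to\infty$, Chebyshev's inequality forces $\mathbb{P}(\tau_n\le T)\to 0$, so $\tau_\infty\ge T$ almost surely for every $T>0$. Consequently the solution is global and strictly positive. Pathwise uniqueness on $(0,\infty)$ follows from the local Lipschitz property of the coefficients via a standard Gronwall argument applied on $[0,\tau_n]$, then extended to $[0,T]$ using $\tau_\infty=\infty$.

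The main obstacle I anticipate is the simultaneous calibration of the exponents $p$ and $q$ in $V$: they must be positive so that $V$ diverges at both boundaries (this is what makes Chebyshev give $\mathbb{P}(\tau_n\le T)\to 0$), yet small enough that every destabilizing term---the inverse drift $\alpha_{-1}x^{-1}$ near $0$, the polynomial diffusion $\sigma^2 x^{2\rho}$ near $\infty$, and the jump compensator generated by a super-linear $V$ composed with a merely linear jump---is absorbed by the mean-reverting $-\alpha_2 x^r$ term and by $V$ itself. The structural condition $r+1\ge 2\rho$ is exactly what makes the large-$x$ balance possible; \eqref{eq:nu_assmption2[0.5AS25]} is what prevents the post-jump state from landing arbitrarily close to $0$ and blowing up the $x^{-q}$ piece of the compensator. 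These two structural assumptions are the essential ingredients that make the Lyapunov argument close.
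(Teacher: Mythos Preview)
Your argument is correct and is the standard Lyapunov/non-explosion approach for this class of models: local Lipschitz on compacta of $(0,\infty)$ gives a local solution up to an exit time, and a barrier function of the form $V(x)=x^{p}+x^{-q}$ together with $\mathcal{L}V\le C_3 V+C_4$ upgrades this to a global positive solution via Gronwall and Chebyshev. Your treatment of the jump compensator is also right: \eqref{eq:nu_assmption2[0.5AS25]} keeps the post-jump state bounded away from $0$, which controls the $x^{-q}$ piece, and the linear growth \eqref{eq:nu_assmption3[0.5AS25]} controls the $x^{p}$ piece.

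The paper, however, does not prove this lemma at all. It simply quotes the result from \cite[Proposition~1]{zhao2021backward} and moves on. So there is no ``paper's own proof'' to compare against; your sketch supplies precisely the argument that the cited reference contains (and that the present authors chose to import rather than reproduce). In that sense your proposal is more self-contained than the paper itself.
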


The next lemma summarizes the results of the moment bounds for the solution of model \eqref{eq:ait-sahalia[0.5AS25]} in both the non-critical case $r +1 > 2\rho$ and the critical case $r+1=2\rho$
(cf. \cite[Lemma 1, Lemma 2]{zhao2021backward}).
\begin{lem}\label{lem:moment_case1[0.5AS25]}
    Let the conditions \eqref{eq:nu_assmption1[0.5AS25]}-\eqref{eq:nu_assmption2[0.5AS25]} hold and
    let $\{ X_t \}_{ t\in [0,T] }$ be the solution of (\ref{eq:ait-sahalia[0.5AS25]}). 

\begin{enumerate}
    \item if $r + 1 > 2 \rho$, then for any \(p_0 \geq 2\) and $p \geq  1$, it holds that
\begin{equation}
    \sup_{t \in [0,T]} \mathbb{E}[|X_t|^{p_0}] < \infty, \quad \sup_{t \in [0,T]} \mathbb{E}[|X_t|^{-p}] < \infty. 
\end{equation}
 
    \item if $r + 1 = 2 \rho$, then for any \(2 \leq p_1 \leq \frac{\sigma^2 + 2\alpha_2}{\sigma^2}\) and $p \geq 1$, it holds that
\begin{equation}
\sup_{t \in [0,T]} \mathbb{E}[|X_t|^{p_1}] < \infty, \quad \sup_{t \in [0,T]} \mathbb{E}[|X_t|^{-p}] < \infty.
\end{equation}
\end{enumerate}
\end{lem}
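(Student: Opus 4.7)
The plan is to establish both positive and negative moment bounds by applying It\^o's formula for jump-diffusions to the test functions $V_+(x) = x^{p_0}$ (respectively $x^{p_1}$) and $V_-(x) = x^{-p}$, extracting a good sign from the dominant drift term, and closing the argument by Gronwall's inequality. Since the solution is positive by Lemma~\ref{wellposed of exact solution[0.5AS25]}, these test functions are well-defined along the path. To avoid circularity in taking expectations, I would first work with the stopped process $\{X_{t \wedge \tau_k}\}$ where $\tau_k := \inf\{t \geq 0 : X_t \geq k \text{ or } X_t \leq 1/k\}$, perform all estimates on the stopped process so that the local martingale parts (both the Brownian and the compensated Poisson integrals) become genuine martingales with zero expectation, then pass to the limit $k \to \infty$ via Fatou.

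For positive moments, applying It\^o's formula yields, after dropping the martingale parts,
\begin{equation*}
\mathbb{E}[X_{t\wedge\tau_k}^{p}] = x_0^p + \mathbb{E}\!\int_0^{t\wedge\tau_k}\!\Bigl[p\alpha_{-1}X_s^{p-2} - p\alpha_0 X_s^{p-1} + p\alpha_1 X_s^p - p\alpha_2 X_s^{p+r-1} + \tfrac{p(p-1)\sigma^2}{2}X_s^{p+2\rho-2}\Bigr]\,\mathrm{d}s + \mathbb{E}\!\int_0^{t\wedge\tau_k}\!\lambda\bigl[(X_s+\nu(X_s))^p - X_s^p\bigr]\mathrm{d}s.
\end{equation*}
In the non-critical case $r+1 > 2\rho$ one has $p+r-1 > p+2\rho-2$, so the negative term $-p\alpha_2 X^{p+r-1}$ dominates the diffusion contribution for large $X$ and the whole integrand admits a bound of the form $C_1 + C_2 X_s^p$ (using \eqref{eq:nu_assmption3[0.5AS25]} to obtain $(X_s+\nu(X_s))^p \leq C(1 + X_s^p)$ for the jump increment). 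Gronwall then produces a bound uniform in $k$ and $t \in [0,T]$ for any $p_0 \geq 2$. In the critical case $r+1 = 2\rho$, the two leading terms merge into $\bigl(\tfrac{p(p-1)\sigma^2}{2} - p\alpha_2\bigr)X^{p+2\rho-2}$, which is nonpositive precisely when $p \leq \tfrac{\sigma^2+2\alpha_2}{\sigma^2}$; under that constraint the same Gronwall argument closes.

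For negative moments, applying It\^o's formula to $x^{-p}$ generates the key term $-p\alpha_{-1} X_s^{-p-2}$, which is strongly negative near the origin and dominates the remaining terms $p\alpha_0 X^{-p-1}$, $p\alpha_2 X^{r-p-1}$, and $\tfrac{p(p+1)\sigma^2}{2}X^{2\rho-p-2}$ (the latter being harmless since $\rho > 1$ makes the exponent less singular than $-p-2$, and for large $X$ we control it by $X^p$-moments just established). The jump contribution $\lambda\mathbb{E}\bigl[(X_s+\nu(X_s))^{-p} - X_s^{-p}\bigr]$ is bounded above by using assumption \eqref{eq:nu_assmption2[0.5AS25]}: the lower bound $X_s + \nu(X_s) > m\min\{1, X_s\}$ yields $(X_s+\nu(X_s))^{-p} \leq m^{-p}(1 + X_s^{-p})$, so the jump term also fits into a $C_1 + C_2\mathbb{E}[X_s^{-p}]$ bound. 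Gronwall then yields the desired uniform negative moment bound for every $p \geq 1$.

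The main obstacle I anticipate is the bookkeeping in the critical case, where the sign of the combined super-linear term depends delicately on $p$ and forces the upper threshold $\tfrac{\sigma^2+2\alpha_2}{\sigma^2}$; one must be careful to absorb the subleading polynomial terms (like $p\alpha_1 X^p$ and the linear jump contribution) into the Gronwall factor without overshooting the critical exponent. A secondary technical point is verifying that the quadratic variation of the Brownian martingale part, $\mathbb{E}\!\int_0^{t\wedge\tau_k}\!(pX_s^{p-1}\sigma X_s^\rho)^2\,\mathrm{d}s$, is finite for the stopped process before discarding it, which is immediate from the definition of $\tau_k$ but must be stated for rigor. Once these are in place, passing $k\to\infty$ via Fatou delivers both conclusions.
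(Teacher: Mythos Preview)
The paper does not actually prove this lemma: it is quoted directly from \cite[Lemma~1, Lemma~2]{zhao2021backward} without argument. Your sketch via It\^o's formula applied to $x^{p}$ and $x^{-p}$, localization by the two-sided stopping time $\tau_k$, sign extraction from the dominant drift term, and Gronwall, is precisely the standard route and is almost certainly what the cited reference does. Your identification of the critical threshold $p_1 \leq (\sigma^2+2\alpha_2)/\sigma^2$ from the merged coefficient $\tfrac{p(p-1)\sigma^2}{2} - p\alpha_2$ is correct, and your use of \eqref{eq:nu_assmption2[0.5AS25]} to bound $(X_s+\nu(X_s))^{-p}$ is the right way to handle the jump part for inverse moments.

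One minor refinement: in the critical case, for small $p$ the terms $p\alpha_2 X_s^{r-p-1}$ and $\tfrac{p(p+1)\sigma^2}{2}X_s^{2\rho-p-2}$ in the $x^{-p}$ computation may carry a positive exponent exceeding the available positive-moment order $p_1$. The clean way around this is to first run your argument for $p$ large enough that all exponents are nonpositive (so no positive moments are needed at all), and then recover small $p$ by Lyapunov's inequality $\mathbb{E}[X^{-p}] \leq (\mathbb{E}[X^{-P}])^{p/P}$. With that adjustment your plan is complete.
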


Equipped with Lemma \ref{lem:moment_case1[0.5AS25]}, 
the H\"older continuous properties of the solution $\{ X_t \}_{ t\in [0,T] }$ to model \eqref{eq:ait-sahalia[0.5AS25]} can also be derived, as given in \cite{zhao2021backward} (see Lemmas 4, 5 and the equations (59), (78) therein).
Here, we collect the results in the following lemma.
\begin{lem}
\label{lem:Holder_continuous_X[0.5AS25]}
    Under same assumptions as in Lemma \ref{wellposed of exact solution[0.5AS25]},
    for either the non-critical case $r + 1 > 2 \rho$, or the critical case $r + 1 = 2 \rho$ with $\tfrac{\alpha_2}{\sigma^2} > 2r - \tfrac32$,
    it holds for all $t,s \in[0,T]$ that
    \begin{align}
       \Vert X_t - X_s \Vert_{L^{2}(\Omega; \mathbb{R})}
         \leq 
        C \vert t-s \vert^{\frac12},
        &
        \quad
        \Vert X_t^{-1} - X_s^{-1} \Vert_{L^{2}(\Omega; \mathbb{R})}
         \leq 
        C \vert t-s \vert^{\frac12}.
    \end{align}
    
\end{lem}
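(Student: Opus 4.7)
The plan is to derive both bounds directly, assuming without loss of generality $s\le t$: for the first, work from the integral form of equation \eqref{eq:ait-sahalia[0.5AS25]} and apply standard $L^2$-isometries; for the second, apply the jump-SDE It\^o formula to $F(x)=x^{-1}$ and repeat the same programme. The role of the case hypothesis is purely to ensure, via Lemma \ref{lem:moment_case1[0.5AS25]}, that every positive and negative power of $X_u$ appearing in the integrands has a finite moment uniformly in $u\in[0,T]$.

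For the first estimate, decompose the increment $X_t-X_s$ into its drift, Brownian, and Poisson contributions, splitting $dN_u=d\tilde N_u+\lambda\,du$ so that the jump integral becomes a square-integrable martingale plus a Lebesgue integral. Applying Cauchy--Schwarz to the Lebesgue pieces, the It\^o isometry to the Brownian integral, and the compensated-Poisson isometry to $\int_s^t\nu(X_{u^-})\,d\tilde N_u$, then using \eqref{eq:nu_assmption3[0.5AS25]}, gives
\begin{equation}
\E|X_t-X_s|^2 \le C(t-s)\int_s^t \E\bigl[X_u^{-2}+1+X_u^2+X_u^{2r}\bigr]\,du + C\int_s^t \E\bigl[X_u^{2\rho}+1+X_u^2\bigr]\,du .
\end{equation}
Under either set of hypotheses, Lemma \ref{lem:moment_case1[0.5AS25]} makes each supremum finite, so the right-hand side is dominated by $C(t-s)$.

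For the second estimate, the jump-SDE It\^o formula applied to $F(x)=x^{-1}$ yields
\begin{equation}
X_t^{-1}-X_s^{-1} = \int_s^t\!\bigl[-\alpha_{-1}X_u^{-3}+\alpha_0 X_u^{-2}-\alpha_1 X_u^{-1}+\alpha_2 X_u^{r-2}+\sigma^2 X_u^{2\rho-3}\bigr]du - \sigma\!\int_s^t\! X_u^{\rho-2}\,dW_u + \int_s^t\!\Delta_u\,dN_u,
\end{equation}
where $\Delta_u=-\nu(X_{u^-})/\bigl[X_{u^-}(X_{u^-}+\nu(X_{u^-}))\bigr]$. Condition \eqref{eq:nu_assmption2[0.5AS25]} implies $(X_{u^-}+\nu(X_{u^-}))^{-1}\le m^{-1}\max\{1,X_{u^-}^{-1}\}$, so together with \eqref{eq:nu_assmption3[0.5AS25]} one bounds $|\Delta_u|$ by a linear combination of $1$, $X_{u^-}^{-1}$, and $X_{u^-}^{-2}$. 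Splitting $dN_u$ again into compensated martingale plus $\lambda\,du$ and applying Cauchy--Schwarz and the two isometries as before leads to
\begin{equation}
\E|X_t^{-1}-X_s^{-1}|^2 \le C(t-s)\sup_{u\in[0,T]}\E\bigl[X_u^{-6}+X_u^{-4}+X_u^{-2}+1+X_u^{2r-4}+X_u^{4\rho-6}+X_u^{2\rho-4}\bigr].
\end{equation}

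The main obstacle is to certify that every exponent above is admissible under Lemma \ref{lem:moment_case1[0.5AS25]}. The negative-exponent contributions are handled unconditionally by the negative-moment bound. The genuine constraint is on the positive exponents: in the non-critical case $r+1>2\rho$ all finite positive moments exist, so there is nothing to check; in the critical case $r+1=2\rho$ the largest relevant exponent is $2r$ (from the first estimate) since $4\rho-6=2r-4<2r$, and finiteness of $\E[X_u^{2r}]$ reduces to $2r\le(\sigma^2+2\alpha_2)/\sigma^2$, which is comfortably ensured by the hypothesis $\alpha_2/\sigma^2>2r-\tfrac32$. Once this bookkeeping is in place, both right-hand sides are $O(t-s)$, yielding the stated Hölder exponent $1/2$.
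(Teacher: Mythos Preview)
Your argument is correct. The paper does not actually prove Lemma~\ref{lem:Holder_continuous_X[0.5AS25]}; it simply quotes the result from \cite{zhao2021backward} (Lemmas~4,~5 and equations~(59),~(78) there). Your self-contained derivation via the integral representation of $X$ and the jump It\^o formula for $x\mapsto x^{-1}$ is a sound alternative: the drift, diffusion and compensated-jump pieces are handled exactly as one would expect, the bound on the jump increment $\Delta_u$ using \eqref{eq:nu_assmption2[0.5AS25]} and \eqref{eq:nu_assmption3[0.5AS25]} is right, and the moment bookkeeping against Lemma~\ref{lem:moment_case1[0.5AS25]} is accurate.

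One remark worth making: your route actually needs less than the stated critical-case hypothesis. The binding positive moment in your estimates is $\E[X_u^{2r}]$, which by Lemma~\ref{lem:moment_case1[0.5AS25]} only requires $\tfrac{\alpha_2}{\sigma^2}\ge r-\tfrac12$; the paper's condition $\tfrac{\alpha_2}{\sigma^2}>2r-\tfrac32$ corresponds to having $\E[X_u^{4r-2}]<\infty$, which is what the argument in \cite{zhao2021backward} apparently consumes. So your proof works a fortiori under the stated assumption, and in fact establishes the lemma under a weaker one.
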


Before closing this section, we present the monotonicity condition for
$f$ and $g$ in the following lemma, 
whose proof can be found in \cite[Lemma 5.9, Lemma 5.12]{wang2023mean}.

\begin{lem}     
\label{lemma:f_g_monotonicity[0.5AS25]}
    Let $f$ and $g$ be defined by \eqref{eq:f_and_g[0.5AS25]}.
    If the parameters in the model \eqref{eq:ait-sahalia[0.5AS25]} satisfy
    \begin{equation} \label{eq:parameter_conditions[0.5AS25]}
\begin{aligned}
    &\text{(a)} \, r + 1 > 2\rho \quad
    \text{or}
    \quad
    \text{(b)} \, r + 1 = 2\rho, \quad \tfrac{\alpha_2}{\sigma^2}
    >
    \tfrac{1}{8} \left(r + 2 + \tfrac{1}{r} \right),
\end{aligned}
\end{equation}
    then there exist constants $q > 2$ and $L > 0$ such that for all $x \in D$, 
    \begin{equation} \label{eq:monotonicity[0.5AS25]}
     f'(x) + \tfrac{q- 1}{2} | g'(x) |^2 
     \leq L .
    \end{equation}
\end{lem}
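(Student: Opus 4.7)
The plan is to reduce the statement to a one-variable calculus question by computing the derivatives directly. Since $f(x) = -\alpha_2 x^r$ and $g(x) = \sigma x^\rho$, we have $f'(x) = -\alpha_2 r x^{r-1}$ and $g'(x) = \sigma \rho x^{\rho-1}$, so the quantity to control is
\begin{equation*}
  \phi_q(x) := f'(x) + \tfrac{q-1}{2}|g'(x)|^2 = -\alpha_2 r\, x^{r-1} + \tfrac{(q-1)\sigma^2 \rho^2}{2}\, x^{2\rho-2}.
\end{equation*}
The task becomes choosing $q>2$ so that $\sup_{x>0}\phi_q(x)$ is finite, which I will split into the two cases prescribed by \eqref{eq:parameter_conditions[0.5AS25]}.

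In the non-critical case $r+1>2\rho$, the exponent on the negative term satisfies $r-1 > 2\rho-2 \geq 0$, so the negative monomial dominates at infinity. Since $\phi_q$ is continuous on $(0,\infty)$ with $\phi_q(x)\to 0$ as $x\to 0^+$ and $\phi_q(x)\to -\infty$ as $x\to\infty$, it attains a finite maximum on $(0,\infty)$ for every $q>2$. Hence any $q>2$ works, and $L$ may be taken as this finite maximum.

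In the critical case $r+1=2\rho$ the two exponents coincide, $r-1 = 2\rho-2$, and $\phi_q$ collapses into a single monomial
\begin{equation*}
   \phi_q(x) = \Bigl[\tfrac{(q-1)\sigma^2 \rho^2}{2} - \alpha_2 r\Bigr] x^{r-1}.
\end{equation*}
Since $x^{r-1}$ is unbounded above on $(0,\infty)$, boundedness of $\phi_q$ forces the bracket to be nonpositive, equivalently $q \leq 1 + \tfrac{2\alpha_2 r}{\sigma^2 \rho^2}$. Such a $q$ strictly greater than $2$ exists if and only if $\tfrac{\alpha_2}{\sigma^2} > \tfrac{\rho^2}{2r}$. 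Substituting $\rho = (r+1)/2$ gives $\tfrac{\rho^2}{2r} = \tfrac{(r+1)^2}{8r} = \tfrac{1}{8}\bigl(r+2+\tfrac{1}{r}\bigr)$, which is precisely the threshold appearing in hypothesis (b). For any admissible $q$ in this range the bracket is $\leq 0$, so $\phi_q(x)\leq 0$ on $(0,\infty)$ and we may take $L=0$.

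The only genuinely non-mechanical step is recognizing that condition (b) has been engineered so that the coefficient inequality $\tfrac{q-1}{2}\sigma^2\rho^2 \leq \alpha_2 r$ leaves room strictly above $q=2$; the substitution $\rho=(r+1)/2$ then identifies the expression $\tfrac{1}{8}(r+2+1/r)$ as the sharp critical constant. Once this algebraic correspondence is noted, both cases follow from elementary monotonicity/growth comparisons of power functions, and no further estimates on $\nu$, $\alpha_{-1}$, $\alpha_0$ or $\alpha_1$ are needed.
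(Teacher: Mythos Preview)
Your argument is correct and is essentially the standard direct computation; the paper does not give its own proof of this lemma but cites \cite{wang2023mean}, and when it later verifies Assumption~\ref{assump:modification_f-g[0.5AS25]} for the tamed scheme it carries out exactly the algebra you describe (choosing $v>2$ with $\alpha_2 r \geq \tfrac{v-1}{2}\sigma^2\rho^2$ via the identity $\rho^2/(2r)=\tfrac18(r+2+1/r)$). One cosmetic point: in case~(b) you write ``we may take $L=0$,'' but the lemma as stated asks for $L>0$; since $\phi_q\le 0$, any positive $L$ works, so this is harmless.
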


We mention that the above lemma is not directly used in the convergence analysis. Instead, it plays a crucial role in the design of numerical schemes. In particular, \eqref{eq:parameter_conditions[0.5AS25]} enables the verification of certain monotonicity condition (see \eqref{eq:assump.(x-y)(f_h(x) - f_h(y))[0.5AS25]}) when taming-based or projection-based correction functions are used, 
as will be discussed in the next section. 
Therefore, throughout this paper, we assume that \eqref{eq:parameter_conditions[0.5AS25]} holds.

\section{A class of explicit Euler-type schemes}\label{section:scheme[0.5AS25]}

In this section we propose a class of explicit Euler-type schemes for the A\"{i}t-Sahalia type model with Poisson jumps \eqref{eq:ait-sahalia[0.5AS25]}, with two concrete examples of the schemes provided.
Throughout, we perform the temperal discretization on a uniform mesh within the interval $[0,T]$, by setting
a uniform step-size $h = \tfrac{T}{N}, N \in \mathbb{N} $ and denoting the grid points $t_n := nh, n \in \{ 0,1, ..., N  \}$.

Then we propose
the explicit Euler-type scheme for the model \eqref{eq:ait-sahalia[0.5AS25]}
 as follows:
\begin{equation}\label{eq:numeri_method[0.5AS25]}
 Y_{n+1}
       =
      Y_n
       +  (\alpha_{-1} Y_{n+1}^{-1} - \alpha_0 + \alpha_1 Y_n + f_h (Y_n) ) h
       + g_h (Y_n)  \Delta W_n 
       + \nu( Y_n ) \Delta N_n,
\end{equation}
where $n \in \{0,1 ,..., N-1\}$ and $ Y_0 = x_0$. Here, by $\Delta W_n := W_{ t_{n+1} } - W_{ t_{n} }$ and $\Delta N_n := N_{ t_{n+1} } - N_{ t_{n} }$ we denote the increments of the Brownian motion and the Poisson process, respectively.
Moreover, the mappings \(f_h\) and \(g_h\) introduced here that depend on time step-size \(h\) are some modifications of $f$ and $g$, respectively.
In what follows, we mainly state the assumptions on
the functions \(f_h\) and \(g_h\).

\begin{assumption}\label{assump:modification_f-g[0.5AS25]}
        Let $f, g$ be defined by 
    \eqref{eq:f_and_g[0.5AS25]}.
    For any $h >0$, for all $x \in D$, there exists a constant $L_1 >0$ such that the mappings
    $f_h$ and $g_h$ obey  
\begin{align}
    \label{eq:assump. f_h(x)<=f(x)[0.5AS25]}
    | f_h(x) | \leq |f(x)| , \quad
    | g_h(x) | &\leq |g(x)| , 
    \\
    \label{eq:assump.f(x) - f_h(x)[0.5AS25]}
    | f(x) - f_h(x) | +
    | g(x) - g_h(x) | 
    &\leq L_1  h^{\frac{1}{2}} ( 1 + | x|^{ 2r } ) .
\end{align}
Moreover, for all $x, y \in D$, there exist constants $L_2,L_3 >0$ and some constant $v>2$ such that
\begin{align}
    \label{eq:assump.f_h(x) - f_h(y)[0.5AS25]} 
    | f_h(x) - f_h(y) | &
    \leq L_2 (1 + h^{-\frac12})  | x - y | , \\ 
    \label{eq:assump.(x-y)(f_h(x) - f_h(y))[0.5AS25]} 
    (x-y)(f_h(x) - f_h(y)) & + \tfrac{v-1}{2} | g_h (x) - g_h (y) | ^{2} 
    \leq L_3 | x-y |^2 .
\end{align}

\end{assumption}

Before proceeding further, we show two examples of $f_h, g_h$ that meet all the above assumptions.

\begin{example}\label{example1}
    Let $f, g$ be defined by 
    \eqref{eq:f_and_g[0.5AS25]}.
    Define the modifications 
    $f_h$ and $g_h$ as
\begin{equation}
f_h(x) := \frac{f(x)}{1 + h^{\frac{1}{2} } |x|^{r}}, \quad g_h(x) := \frac{g(x)}{1 + h^{\frac{1}{2} } |x|^{r}}, 
\quad
 x \in \R.
\end{equation}
\end{example}
Here, the structure of the mappings $f_h$ and $g_h$ is the well-known taming strategy, which was firstly introduced in \cite{hutzenthaler2012strong} and has been extensively studied in many works (see, e.g., \cite{sabanis2016euler,ex_sabanis2013note,liu2023higher,Lord2024convergence,liu2025unconditionally}).

In what follows, we check the conditions in Assumption \ref{assump:modification_f-g[0.5AS25]} one by one.
Note that the condition (\ref{eq:assump. f_h(x)<=f(x)[0.5AS25]}) 
it is obviously fulfilled,
while
the condition (\ref{eq:assump.f(x) - f_h(x)[0.5AS25]}) is also easily confirmed with \( L_1 = \max \{ \alpha_2 , \sigma \} \) by deducing for all $x \in D$,
\begin{align}
      |f(x)-f_h(x)| 
      &
      = \frac{h^{\frac{1}{2}} |x|^r |f(x)|}{1 + h^{\frac{1}{2}} |x|^r} 
      \leq \alpha_2 h^{\frac{1}{2}} |x|^{2r} ,
      \\
       |g(x)-g_h(x)| 
       &
       = \frac{h^{\frac{1}{2}} |x|^{r} |g(x)|}{1 + h^{\frac{1}{2}} |x|^r} 
       \leq
       \sigma h^{\frac{1}{2}}  |x|^{r+\rho} 
       \leq \sigma h^{\frac{1}{2}} (1 + |x|^{2r}) , 
\end{align}
where we used the fact that $\rho \leq \tfrac{r+1}{2} < r$.

Now it remains to verify the conditions 
(\ref{eq:assump.f_h(x) - f_h(y)[0.5AS25]}) and (\ref{eq:assump.(x-y)(f_h(x) - f_h(y))[0.5AS25]}).
We first derive
\begin{equation}\label{eq:derivative_f_h-g_h}
f_h'(x) = \frac{ -\alpha_2 r x^{r-1} }{(1 + h^{\frac{1}{2}} |x|^r)^2}, \quad
g_h'(x) = \frac{\sigma \rho x^{\rho-1} - \sigma (r - \rho) h^{\frac{1}{2}} x^{r+\rho-1}}{(1 + h^{\frac{1}{2}} |x|^r)^2},
\quad
x \in D.
\end{equation}
Since
\begin{equation}
    |f_h'(x)|
    \leq
    \frac{\alpha_2 r ( 1 + |x|^{r}) }{(1 + h^{\frac{1}{2}} |x|^r)^2}
    \leq
    \alpha_2 r ( 1 + h^{-\frac12} ),
\end{equation}
then for all $x,y \in D$,
\begin{equation} \label{example. f_h(x) - f_h(y)[0.5AS25]}
\begin{aligned}
\left| f_h(x) - f_h(y) \right|
=
\left| \int_0^1 f_h'(\theta x + (1-\theta) y) \mathrm{~d}\theta \cdot (x - y) \right|
\leq
\alpha_2 r ( 1 + h^{-\frac12} )|x-y| ,
\end{aligned}
\end{equation}
which confirms (\ref{eq:assump.f_h(x) - f_h(y)[0.5AS25]}) with $L_2=\alpha_2 r$.
Next, we claim for some constants $v >2$ and $L_3>0$ that
\begin{equation}\label{eq:ex1_prove_f'-g'[0.5AS25]}
    f_h'(x) + \tfrac{v-1}{2} |g_h'(x)|^2 \leq L_3,
    \quad
     \text{for all } x \in D.
\end{equation}
To see this, recalling \eqref{eq:derivative_f_h-g_h} one obtains
for all $x \in D$,
\begin{equation}\label{eq:prove_f_h'_beta_g_h'_0.5AS25}
\begin{aligned}
f_h'(x) + \tfrac{v-1}{2} |g_h'(x)|^2 
&= \frac{ -\alpha_2 r x^{r-1} (1 + h^{\frac{1}{2}} x^r)^2 + \tfrac{v-1}{2} [ \sigma \rho x^{\rho-1}-\sigma(r-\rho)h^\frac{1}{2} x^{r+\rho-1}]^2 }
{(1 + h^{\frac{1}{2}} |x|^r)^4} \\
&= \frac{ -\alpha_2 r x^{r-1} - 2 \alpha_2 r h^{\frac{1}{2}} x^{2r-1} - \alpha_2 r h x^{3r-1} }{(1 + h^{\frac{1}{2}} |x|^r)^4} \\
&\quad + \frac{ \tfrac{v-1}{2} [ \sigma^2 \rho^2 x^{2\rho-2} - 2 \sigma^2 \rho (r - \rho) h^{\frac{1}{2}} x^{2\rho + r - 2} + \sigma^2 (r - \rho)^2 h x^{2r + 2\rho - 2} ] }{(1 + h^{\frac{1}{2}} |x|^r)^4}.
\end{aligned}
\end{equation}
Notice that the claim \eqref{eq:ex1_prove_f'-g'[0.5AS25]} immediately follows from \eqref{eq:prove_f_h'_beta_g_h'_0.5AS25} when
$r +1 > 2\rho$.
Moreover,
for the critical case \( r + 1 = 2\rho \), 
by recalling \eqref{eq:parameter_conditions[0.5AS25]}, one can find some $v >2$ such that
$    \tfrac{\alpha_2}{\sigma^2} 
         >
         \tfrac{v-1}{8} \big(r + 2 + \tfrac{1}{r} \big) $,
that is,
\( \alpha_2 r \geq \frac{v - 1}{2} \sigma^2 \rho^2 \).
Then by noting that $2\rho = r+1 > r$,
namely,
$\rho > r- \rho >0$,
one further obtains \( \alpha_2 r \geq \frac{v - 1}{2} \sigma^2 \rho (r - \rho) \) and \( \alpha_2 r \geq \frac{v - 1}{2} \sigma^2 (r - \rho)^2 \).

Hence we derive from \eqref{eq:prove_f_h'_beta_g_h'_0.5AS25} that for $r +1 = 2\rho$,
there exists a constant $L_3$ such that
\begin{equation}
\begin{aligned}
&f_h'(x) + \tfrac{v-1}{2} |g_h'(x)|^2 
\\
&= 
\frac{ -(\alpha_2 r - \tfrac{v-1}{2} \sigma^2 \rho^2 ) x^{r-1}  
-
2 (\alpha_2 r -\tfrac{v-1}{2} \sigma^2 \rho (r - \rho) ) h^{\frac{1}{2}} x^{2r-1} 
-
 (\alpha_2 r - \tfrac{v-1}{2}\sigma^2 (r - \rho)^2  )  h x^{3r-1}
}{(1 + h^{\frac{1}{2}} |x|^r)^4} 
\\
&\leq
L_3.
\end{aligned}
\end{equation}
Thus we conclude the claim 
\eqref{eq:ex1_prove_f'-g'[0.5AS25]} for $r+1 \geq 2\rho$
and it is then deduced
for any $x,y \in D$ that
\begin{equation}
\begin{split}
   &
    (x-y)(f_h(x) - f_h(y)) + \tfrac{v- 1}{2} | g_h (x) - g_h (y) |^{2} 
    \\
  &
    \leq\ 
 |x-y|^2 \left[ \int_0^1 f_h'(\theta x + (1-\theta) y) \, \mathrm{d}\theta
 + \tfrac{ 1}{2} \left| \int_0^1 g_h'(\theta x + (1-\theta) y) \, \mathrm{d}\theta \right|^2 \right] \\
   & \leq\  L_3 | x-y |^2,
\end{split}
\end{equation}
which confirms (\ref{eq:assump.(x-y)(f_h(x) - f_h(y))[0.5AS25]}). As a result, $f_h$ and $g_h$ satisfy all conditions in Assumption \ref{assump:modification_f-g[0.5AS25]}.

\begin{example}\label{example2[0.5AS25]}
    Let $f, g$ be defined as 
    \eqref{eq:f_and_g[0.5AS25]}.
For any given $\kappa \in \big[ \tfrac{1}{2r}$, $\tfrac{1}{2r-2} \big]$, 
we define $f_h$ and $g_h$ by
\begin{equation}
   f_h(x):=f(\P_h(x)),\quad g_h(x):=g(\P_h(x)), \quad x \in \R.
\end{equation}
where $\P_h$ is defined by
\begin{equation}\label{eq:Ph-defn}
    \mathscr{P}_h (x)
    :=
    \min \{ 1 , h^{ - \kappa } |x|^{-1} \} x, \quad x \in \R.
\end{equation}
\end{example}

Here, the projection operator $\mathscr{P}_h$ was introduced by \cite{Beyn_Isaak_Kruse:C-stability_B-consistency_JSC16,Beyn_Isaak_Kruse:C-stability_B-consistency_JSC17} to approximate SDEs in non-globally Lipschitz setting (see the projection Euler/Milstein schemes therein).
In what follows we show that $f_h$ and $g_h$ obey all conditions in Assumption \ref{assump:modification_f-g[0.5AS25]}. 
Firstly, by observing 
\begin{equation}\label{in example_for x-y[0.5AS25]}
    |\mathscr{P}_h(x)| \leq |x|, 
\quad 
    \text{for all } x \in D,
\end{equation}
it is clear that
\begin{equation}
    |f(\mathscr{P}_h(x))| \leq |f(x)|, \quad |g(\mathscr{P}_h(x))| \leq |g(x)|,
    \quad 
    \text{for all } x \in D,
\end{equation}
which shows the condition \eqref{eq:assump. f_h(x)<=f(x)[0.5AS25]}.
Furthermore, one sees
\begin{equation}
    | x - \mathscr{P}_h(x) |
     \leq
       \mathbbm{1}_{\{x \geq h^{ -\kappa } \}} 2 | x |
       \leq
       2 h^\frac12 | x |^{ \frac{1}{2\kappa}  + 1 }  
       \leq
       2 h^\frac12 ( 1 + | x |^{  r + 1 })  ,
       \ \
       \text{for all }  x \in D.
\end{equation}
This implies
\begin{equation}\label{eq:prove_f_x_f_P_h_x}
\begin{aligned}
|f(x) - f(\mathscr{P}_h(x))| &= \left| \int_0^1 f'(\theta x + (1 - \theta) \mathscr{P}_h(x)) \, \mathrm{d}\theta \cdot (x - \P_h(x)) \right| \\
&\leq \left (\int_0^1 |f'(\theta x + (1 - \theta) \P_h(x))| \, \mathrm{d}\theta \right )\cdot |x - \P_h(x)| \\
&\leq \alpha_2 r x^{r - 1} \cdot |x - \P_h(x)| \\
&\leq 2\alpha_2 r h^\frac12 (1+|x|^{2r}),
\end{aligned}
\end{equation}
and similarly,
\begin{equation}\label{eq:prove_g_x_g_P_h_x}
\begin{aligned}
|g(x) - g(\P_h(x))| 
\leq \sigma \rho x^{\rho - 1} \cdot |x - \P_h(x)| 
\leq 2\sigma \rho h^\frac12(1+|x|^{\rho +r})
\leq  2\sigma \rho h^\frac12(1+|x|^{2r}).
\end{aligned}
\end{equation}
Taking $L_1 =\max\{ 2\alpha_2r, 2\sigma \rho \} $ confirms the condition  (\ref{eq:assump.f(x) - f_h(x)[0.5AS25]}).

Next, we notice that
\begin{equation}
\label{eq:P_h_bound}
|\P_h(x) - \P_h(y)| \leq |x - y|, \quad \text{for all } x, y \in D,
\end{equation}
which together with $\mathscr{P}_h(x) \leq h^{ -\kappa}, x \in D$ yields for all $ x, y \in D$,
\begin{equation} \label{example. f(Phi_h(x))-f(Phi_h(y)[0.5AS25]}
\begin{aligned}
\left| f(\mathscr{P}_h(x)) - f(\mathscr{P}_h(y)) \right| &= \left| \int_0^1 f'(\theta \mathscr{P}_h(x) + (1-\theta) \mathscr{P}_h(y)) d\theta \cdot (\mathscr{P}_h(x) - \mathscr{P}_h(y)) \right| \\
&\leq \alpha_2 r \int_0^1 \left| \theta \mathscr{P}_h(x) + (1-\theta) \mathscr{P}_h(y) \right|^{r-1} d\theta \cdot \left| \mathscr{P}_h(x) - \mathscr{P}_h(y) \right| \\
&\leq \alpha_2 r h^{-\kappa(r-1)} |x-y| \\
&\leq \alpha_2 r T^{\frac{1 - \kappa(2 r-2)}{2}} \cdot h^{-\frac12} |x-y|,
\end{aligned}
\end{equation}
where we used the fact that $1 - \kappa(2r-2)  \geq  0$.
This confirms \eqref{eq:assump.f_h(x) - f_h(y)[0.5AS25]} with $L_2 =  \alpha_2 r  T^{\frac{1 - \kappa(2r-2)}{2}}$.

Lastly, by noting that $\mathscr{P}_h(x)$ is a non-decreasing function w.r.t. $x \in D$ and recalling \eqref{eq:P_h_bound}, one infers
\begin{equation}
    (x-y)( \mathscr{P}_h(x) - \mathscr{P}_h(y) ) 
    =
    |x-y| \cdot | \mathscr{P}_h(x) - \mathscr{P}_h(y)  |
    \geq |\mathscr{P}_h(x) - \mathscr{P}_h(y) |^2 ,
\end{equation}
for all $x,y \in D$. This together with the fact $f'(x) \leq 0$ as well as Lemma \ref{lemma:f_g_monotonicity[0.5AS25]} implies

\begin{equation}
\begin{split}
    & (x-y)(f(\mathscr{P}_h(x)) - f(\mathscr{P}_h(y))) + \tfrac{ q-1}{2} | g(\mathscr{P}_h(x)) - g(\mathscr{P}_h(y)) |^{2} \\
    &\leq |\mathscr{P}_h(x) - \mathscr{P}_h(y)|^2 \left[ \int_0^1 f'(\theta \mathscr{P}_h(x) + (1-\theta) \mathscr{P}_h(y)) \, \mathrm{d}\theta \right. \\
    &\quad \left. + \tfrac{ q-1}{2} \left| \int_0^1 g'(\theta \mathscr{P}_h(x) + (1-\theta) \mathscr{P}_h(y)) \, \mathrm{d}\theta \right|^2 \right] \\
    &\leq L | \mathscr{P}_h(x) - \mathscr{P}_h(y) |^2 \\
    &\leq L | x - y |^2,
\end{split}
\end{equation}
where $q > 2$ comes from Lemma \ref{lemma:f_g_monotonicity[0.5AS25]}.
This verifies (\ref{eq:assump.(x-y)(f_h(x) - f_h(y))[0.5AS25]}) with $v=q$ and $L_3=L$.

Hence
it is concluded that the modification functions $f_h$ and $g_h$ satisfy all conditions in Assumption \ref{assump:modification_f-g[0.5AS25]}.

~

Note that by solving a quadratic equation,
the unique positive solution determined by (\ref{eq:numeri_method[0.5AS25]}) can be explicitly given as
\begin{equation}\label{eq:solution_of_scheme}
    Y_{n+1} = \frac{1}{2} \left[ Y_n + \vartheta_n h + \mathcal{S}_n
    +
    \sqrt{\left( Y_n + \vartheta_n h + \mathcal{S}_n\right)^2 + 4\alpha_{-1} h} \right] > 0,  
\end{equation}
where for short we denote
\begin{equation}
   \vartheta_n := -\alpha_0 + \alpha_1 Y_n + f_h(Y_n),\quad \mathcal{S}_n := g_h(Y_n) \Delta W_n + \nu( Y_n ) \Delta N_n.
\end{equation}
Consequently, the scheme admits the following well-posedness and positivity-preserving property.
\begin{lem}\label{lem:wellposed_of_Numerical method[0.5AS25]}    
     For any step-size $h = \tfrac{T}{N} > 0$,
     the numerical scheme \eqref{eq:numeri_method[0.5AS25]} is well-posed and positivity preserving, 
     i.e., it admits a unique positive $\{ \mathcal{F}_{t_n}\}_{n=0}^N $-adapted solution $\{ Y_n \}_{ n=0 }^{N}, N \in \mathbb{N} $ for the scheme \eqref{eq:numeri_method[0.5AS25]} given a positive initial data. 
\end{lem}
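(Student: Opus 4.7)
The plan is to prove the lemma by induction on $n$, reducing each step to the solvability of a quadratic equation. I would start from the scheme \eqref{eq:numeri_method[0.5AS25]} at step $n$, assume $Y_n>0$ is already $\mathcal{F}_{t_n}$-measurable, and rewrite it as
\begin{equation*}
Y_{n+1} - \alpha_{-1} h\, Y_{n+1}^{-1} = Y_n + \vartheta_n h + \mathcal{S}_n,
\end{equation*}
where $\vartheta_n$ and $\mathcal{S}_n$ are the abbreviations introduced just before the lemma. Multiplying through by $Y_{n+1}$ (which is legitimate once I know any candidate solution must be nonzero, since $\alpha_{-1}h \neq 0$), I would recast the identity as the quadratic
\begin{equation*}
Y_{n+1}^2 \;-\; \bigl(Y_n + \vartheta_n h + \mathcal{S}_n\bigr) Y_{n+1} \;-\; \alpha_{-1} h \;=\; 0.
\end{equation*}

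The key observation is that the discriminant $(Y_n+\vartheta_n h+\mathcal{S}_n)^2 + 4\alpha_{-1}h$ is strictly positive for any $h>0$, so the quadratic has two distinct real roots. Since the product of the roots equals $-\alpha_{-1}h<0$, exactly one of these roots is strictly positive and the other is strictly negative. The positive root is precisely the one obtained with the $+$ sign in the quadratic formula, which coincides with the explicit expression \eqref{eq:solution_of_scheme}. I would then verify directly that $Y_{n+1}$ defined by \eqref{eq:solution_of_scheme} solves the scheme; this is a short algebraic substitution and follows from $Y_{n+1}(Y_{n+1} - a) = \alpha_{-1}h$ where $a = Y_n+\vartheta_n h+\mathcal{S}_n$. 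Uniqueness of the positive solution is automatic from the sign argument on the two roots.

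To finish, I would establish adaptedness by induction. The base case $Y_0=x_0$ is deterministic and hence $\mathcal{F}_0$-measurable. For the inductive step, given that $Y_n$ is $\mathcal{F}_{t_n}$-measurable, the quantity $\vartheta_n = -\alpha_0+\alpha_1 Y_n+f_h(Y_n)$ is $\mathcal{F}_{t_n}$-measurable since $f_h$ is a Borel function by \eqref{eq:assump.f_h(x) - f_h(y)[0.5AS25]}, and $\mathcal{S}_n = g_h(Y_n)\Delta W_n + \nu(Y_n)\Delta N_n$ is $\mathcal{F}_{t_{n+1}}$-measurable because $\Delta W_n$ and $\Delta N_n$ are $\mathcal{F}_{t_{n+1}}$-measurable. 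Hence $Y_{n+1}$, being a continuous function of $Y_n+\vartheta_n h+\mathcal{S}_n$ via \eqref{eq:solution_of_scheme}, is $\mathcal{F}_{t_{n+1}}$-measurable, which closes the induction.

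There is essentially no hard step here: the positivity-preserving mechanism is the implicit $\alpha_{-1}Y_{n+1}^{-1}$ term, which forces the product of the roots of the associated quadratic to be negative regardless of the sign or magnitude of $Y_n+\vartheta_n h+\mathcal{S}_n$. The only point requiring a little care is to note that this positivity is unconditional in $h$, because the discriminant is automatically positive and the sign-of-the-product argument does not impose any step-size restriction; this is in contrast to typical implicit schemes where one needs $h$ small enough to guarantee solvability of the implicit equation.
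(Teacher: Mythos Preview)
Your proposal is correct and follows essentially the same approach as the paper: the paper simply observes that \eqref{eq:numeri_method[0.5AS25]} is a quadratic in $Y_{n+1}$ and records the explicit positive root \eqref{eq:solution_of_scheme}, from which the lemma is stated as an immediate consequence. Your write-up is in fact more detailed than the paper's, spelling out the product-of-roots argument for uniqueness of the positive solution and the inductive adaptedness check, but the underlying idea is identical.
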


\section{The mean-square convergence of the scheme}\label{section:error analysis[0.5AS25]}

Our aim in this section is to establish 
the mean-square convergence analysis for the proposed scheme 
\eqref{eq:numeri_method[0.5AS25]}.
To start with,
it is convenient to recast 
\eqref{eq:ait-sahalia[0.5AS25]} as: 
\begin{equation}\label{eq:ait_sahalia_gridpointVersion[0.5AS25]}
\begin{split}
        X_{ t_{n+1} }
        = 
         X_{ t_{n} }
      & + 
        ( \alpha_{-1} X_{ t_{n+1} }^{-1} - \alpha_0 + \alpha_1  X_{ t_{n} } ) + f_h( X_{ t_{n} } )   ) h  
       + 
        g_h(X_{ t_{n} }) \Delta W_n
        +
        \nu( X_{ t_{n} } ) \Delta N_n
        + 
        R_{n+1}, 
\end{split}
\end{equation}
for $n \in \{ 0,1,..., N-1 \}$, $N \in \mathbb{N}$, where we denote
\begin{equation}\label{R_i[0.5AS25]}
\begin{split}
    R_{n+1} 
    := 
  & 
    \int_{t_n}^{ t_{n+1} } 
    [ \alpha_{-1} X_{s}^{-1} - \alpha_{-1} X_{ t_{n+1} }^{-1} + \alpha_1 X_s -  \alpha_1  X_{ t_{n} } + f(X_s) - f_h( X_{ t_{n} } ) ] 
    \,\mathrm{d}s \\
    & \quad 
    + 
    \int_{t_n}^{ t_{n+1} }
    [  g(X_s) - g_h( X_{ t_{n} } )  
        ] \,\mathrm{d} W_s
 + 
    \int_{t_n}^{ t_{n+1} }
    [  \nu(X_{s^-}) -  \nu( X_{ t_{n} } )  
        ] \,\mathrm{d} N_s .
\end{split}
\end{equation}

With these preparations, we are ready to state the 
upper mean-square error bounds for the underlying schemes.

\begin{thm}[Upper mean-square error bounds]\label{thm(main):Order_onehalf[0.5AS25]}
    Let $\{ X_t \}_{ t\in [0,T] }$ and $\{ Y_n \}_{n=0}^N $ be the solutions of \eqref{eq:ait-sahalia[0.5AS25]} and \eqref{eq:numeri_method[0.5AS25]}, respectively. Let Assumption 
    \ref{assump:modification_f-g[0.5AS25]} 
    hold. 
Then there exists a constant $C$ independent of $h$ such that for all $n \in \{1,2,...,N\}$,
\begin{equation}
    \| X_{ t_{n} } - Y_n \|_{L^2(\Omega;\mathbb{R})} 
    \leq  
   C
   \Big(
   h^{-\frac{1}{2}}
    \sup_{i \in \{1,2,...,n\} }
   \left\| R_{i} \right\|_{L^2(\Omega;\mathbb{R})} 
 +
    h^{-1} 
    \sup_{i \in \{1,2,...,n\} }
    \left\| \mathbb{E}[R_{i}|\mathcal{F}_{t_{i-1}}] \right\|_{L^2(\Omega;\mathbb{R})} 
    \Big)
.
\end{equation}
\end{thm}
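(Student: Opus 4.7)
The plan is to form the error $E_n := X_{t_n} - Y_n$, eliminate the implicit $\alpha_{-1}$ contribution via a sign argument, and then square the recursion and iterate a Gronwall-type estimate whose source term splits into the conditional mean $\bar R_{n+1} := \E[R_{n+1}\mid \F_{t_n}]$ and the conditionally centred remainder $\tilde R_{n+1} := R_{n+1} - \bar R_{n+1}$. Subtracting \eqref{eq:numeri_method[0.5AS25]} from \eqref{eq:ait_sahalia_gridpointVersion[0.5AS25]} and isolating $I_n := \alpha_{-1}(X_{t_{n+1}}^{-1} - Y_{n+1}^{-1}) h$, positivity of $X_{t_{n+1}}$ and $Y_{n+1}$ (Lemmas~\ref{wellposed of exact solution[0.5AS25]} and \ref{lem:wellposed_of_Numerical method[0.5AS25]}) combined with the identity $x^{-1} - y^{-1} = -(x-y)/(xy)$ gives $E_{n+1} I_n \leq 0$, so expanding $(E_{n+1} - I_n)^2 \geq E_{n+1}^2$ yields
\begin{equation*}
E_{n+1}^2 \leq \bigl(E_n + \alpha_1 h E_n + h(f_h(X_{t_n}) - f_h(Y_n)) + (g_h(X_{t_n}) - g_h(Y_n))\Delta W_n + (\nu(X_{t_n}) - \nu(Y_n))\Delta N_n + R_{n+1}\bigr)^2.
\end{equation*}

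Next, I use $\Delta N_n = \Delta \tilde N_n + \lambda h$ to push the predictable jump drift into an $\F_{t_n}$-measurable block
\begin{equation*}
P_n := E_n + \alpha_1 h E_n + h(f_h(X_{t_n}) - f_h(Y_n)) + \lambda h(\nu(X_{t_n}) - \nu(Y_n)) + \bar R_{n+1},
\end{equation*}
and collect the conditionally mean-zero increments into $Q_n := (g_h(X_{t_n}) - g_h(Y_n))\Delta W_n + (\nu(X_{t_n}) - \nu(Y_n))\Delta \tilde N_n + \tilde R_{n+1}$. Since $\E[Q_n \mid \F_{t_n}] = 0$, the cross term vanishes and $\E[E_{n+1}^2] \leq \E[P_n^2] + \E[Q_n^2]$. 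Expanding $P_n^2$ with Young's inequality of weight $h$ on the cross term with $\bar R_{n+1}$, and invoking the monotonicity \eqref{eq:assump.(x-y)(f_h(x) - f_h(y))[0.5AS25]} together with the quasi-Lipschitz bound \eqref{eq:assump.f_h(x) - f_h(y)[0.5AS25]} on $f_h$ and the Lipschitz bound \eqref{eq:nu_assmption1[0.5AS25]} on $\nu$, produces
\begin{equation*}
\E[P_n^2] \leq (1+Ch)\E[E_n^2] - (v-1)h\,\E\bigl[|g_h(X_{t_n}) - g_h(Y_n)|^2\bigr] + (1+h^{-1})\E[\bar R_{n+1}^2].
\end{equation*}
For $\E[Q_n^2]$, independence of $\Delta W_n$ and $\Delta \tilde N_n$ kills the diffusion-jump cross term, and Young's inequality with a small weight $\epsilon \in (0,v-2)$ on the cross terms containing $\tilde R_{n+1}$ gives
\begin{equation*}
\E[Q_n^2] \leq (1+\epsilon)h\,\E\bigl[|g_h(X_{t_n}) - g_h(Y_n)|^2\bigr] + Ch\,\E[E_n^2] + C\,\E[\tilde R_{n+1}^2].
\end{equation*}

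Summing the two bounds, the $|g_h(X_{t_n}) - g_h(Y_n)|^2$ contributions combine with non-positive coefficient $-(v-1-\epsilon)h \leq 0$ and are discarded, leaving $\E[E_{n+1}^2] \leq (1+Ch)\E[E_n^2] + (1+h^{-1})\E[\bar R_{n+1}^2] + C\E[\tilde R_{n+1}^2]$. A discrete Gronwall inequality with $Nh = T$ together with the $L^2$-orthogonality estimate $\|\tilde R_i\|_{L^2(\Omega;\R)} \leq \|R_i\|_{L^2(\Omega;\R)}$ then yields $\E[|E_n|^2] \leq C h^{-2}\sup_i \|\bar R_i\|_{L^2(\Omega;\R)}^2 + C h^{-1}\sup_i \|R_i\|_{L^2(\Omega;\R)}^2$; extracting the square root gives the claimed bound. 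The main obstacle is the simultaneous calibration of the two Young weights: the strict inequality $v > 2$ in Assumption~\ref{assump:modification_f-g[0.5AS25]} is exactly what permits an $\epsilon$ for which the $h|g_h - g_h|^2$ contribution released by the monotonicity absorbs the competing positive one generated by the diffusion-residual cross term in $\E[Q_n^2]$. Equally essential is the decomposition $R_{n+1} = \bar R_{n+1} + \tilde R_{n+1}$ \emph{before} invoking Young's inequality: treating $R_{n+1}$ as a single piece would force the whole of $\|R_i\|_{L^2(\Omega;\R)}$ into the $h^{-1}$-weighted term and destroy the sharper $h^{-1/2}$ scaling promised by the theorem.
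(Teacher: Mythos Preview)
Your proof is correct and rests on the same ingredients as the paper's: the sign argument $E_{n+1}I_n\le 0$ to drop the implicit $\alpha_{-1}$ term, the monotonicity condition \eqref{eq:assump.(x-y)(f_h(x) - f_h(y))[0.5AS25]} paired with the quasi-Lipschitz bound \eqref{eq:assump.f_h(x) - f_h(y)[0.5AS25]}, a Young-inequality calibration exploiting the strict inequality $v>2$, and a discrete Gronwall. The difference is purely organizational. You split $R_{n+1}=\bar R_{n+1}+\tilde R_{n+1}$ and $\Delta N_n=\Delta\tilde N_n+\lambda h$ \emph{upfront} and group the right-hand side into a predictable block $P_n$ and a conditionally-centred block $Q_n$, so that the single identity $\E[P_nQ_n]=0$ eliminates all mixed cross terms at once. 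The paper instead expands the full square directly, handles each cross term individually (showing several vanish by independence of $\Delta W_n,\Delta\tilde N_n$ from $\F_{t_n}$), never decomposes $R_{n+1}$, and introduces the conditional expectation only at the final step via $\E[e_nR_{n+1}]=\E\bigl[e_n\,\E[R_{n+1}\mid\F_{t_n}]\bigr]$; its Young weight on the diffusion--residual cross term is exactly $v-2$ rather than your $\epsilon\in(0,v-2)$. Your $P_n/Q_n$ decomposition is tidier and makes the role of $v>2$ more transparent, but the mathematical content of the two arguments is the same.
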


\begin{proof}
For brevity, we introduce the following notation: 
for all $k \in \{ 0,1,2, ..., N \}$,
\begin{equation}\label{denotion:Delta_f[0.5AS25]}
\begin{split}
    & e_k :=  X_{ t_{k} } - Y_k, 
     \quad 
    \Delta f_{h,k}^{X ,Y} := f_h(  X_{ t_{k} }) -f_h(Y_k),
    \\
    &
    \Delta g_{h,k}^{ X ,Y} := g_h(  X_{ t_{k} }) -g_h(Y_k), \quad
    \Delta \nu_k^{X,Y} := \nu( X_{ t_{k} } ) - \nu( Y_k ).
\end{split}
\end{equation}
For all $n \in \{0, 1,2, ...,N-1\}$, 
subtracting \eqref{eq:numeri_method[0.5AS25]} from \eqref{eq:ait_sahalia_gridpointVersion[0.5AS25]} gives 
\begin{equation}\label{eq:e_n[0.5AS25]}
\begin{split}
    e_{n+1} - h \cdot \alpha_{-1}  (  X_{ t_{n+1} }^{-1} - Y_{n+1}^{-1}  ) 
    & =
    ( 1 + h \alpha_{1} ) e_n
     +
     h \Delta f_{h,n}^{ X, Y}   
     +
     \Delta g_{h,n}^{X, Y}  \Delta W_n
    + 
     \Delta \nu_n^{X, Y}  \Delta N_n
    +
    R_{n+1}.  
\end{split}
\end{equation}
Note that
\begin{equation}
\begin{aligned}
    & \vert e_{n+1} - h \cdot \alpha_{-1}  (  X_{ t_{n+1} }^{-1} - Y_{n+1}^{-1}  ) \vert^2 \\
    & =
    \vert e_{n+1} \vert^2
   +
    2 h \alpha_{-1} | e_{n+1} |^2  \cdot
        \int_0^1 |Y_{n+1} + \theta(X_{t_{n+1}} - Y_{n+1})|^{-2}  \,\mathrm{d}\theta
    +
    h^2 \alpha_{-1}^2 | X_{ t_{n+1} }^{-1} - Y_{n+1}^{-1}  |^2\\
    & \geq
    \vert e_{n+1} \vert^2.
\end{aligned}
\end{equation}
Thus squaring both sides of equation \eqref{eq:e_n[0.5AS25]} and taking expectations yield
\begin{equation}\label{eq:square_of_e_n+1,1[0.5AS25]}
\begin{split}
   \E \Big[ \big| e_{n+1} \big|^2 \Big] 
    & \leq 
    ( 1 + h \alpha_{1} )^2  \mathbb{E}\Big [ | e_n |^2  \Big]   
     +  
     h^2   \mathbb{E}\Big [|  \Delta f_{h,n}^{ X, Y}   |^2 \Big]   
    +
       \mathbb{E}\Big [ |\Delta g_{h,n}^{ X, Y} \Delta W_n   |^2 \Big]      
    +
    \mathbb{E}\Big [ |  \Delta \nu_n^{X, Y}  \Delta N_n |^2 \Big] \\   
    &\quad
    +  
   \mathbb{E}\Big [ | R_{n+1} |^2\Big]   
          + 
          2 h( 1 + h \alpha_{1} )  \mathbb{E}\Big [e_n \Delta f_{h,n}^{ X, Y}  \Big]  
          +  
          2 ( 1 + h \alpha_{1} )  \mathbb{E}\Big [e_n  \Delta g_{h,n}^{ X, Y} \Delta W_n \Big] \\   
          & \quad
          +  
          2 ( 1 + h \alpha_{1} ) \mathbb{E}\Big [e_n  \Delta \nu_n^{X, Y}  \Delta N_n \Big]  
          +  
          2 ( 1 + h \alpha_{1} ) \mathbb{E}\Big [ e_n  R_{n+1} \Big] \\    
              & \quad
              + 
              2 h \mathbb{E}\Big [\Delta f_{h,n}^{X, Y} \Delta g_{h,n}^{X, Y} \Delta W_n\Big]     
              + 
              2 h \mathbb{E}\Big [\Delta f_{h,n}^{X, Y}  \Delta \nu_n^{X, Y}  \Delta N_n \Big]   
              + 
              2 h \mathbb{E}\Big [\Delta f_{h,n}^{X, Y}  R_{n+1} \Big]\\   
                   & \quad
                   +  
                   2 \mathbb{E}\Big [\Delta g_{h,n}^{X, Y} \Delta W_n   \Delta \nu_n^{X, Y}  \Delta N_n \Big]  
                   + 
                   2 \mathbb{E}\Big [\Delta g_{h,n}^{X, Y} \Delta W_n  R_{n+1} \Big] \\  
                         & \quad
                         +  2  \mathbb{E}\Big [\Delta \nu_n^{X, Y}  \Delta N_n R_{n+1}\Big]. 
    \\ & =   
    ( 1 + h \alpha_{1} )^2  \mathbb{E}\Big [ | e_n |^2  \Big]   
     +  
     h^2   \mathbb{E}\Big [|  \Delta f_{h,n}^{ X, Y}   |^2 \Big]   
    +
      h \mathbb{E}\Big [ |\Delta g_{h,n}^{ X, Y}   |^2 \Big]    
    +
   (\lambda h+\lambda^2 h^2) \mathbb{E}\Big [ |  \Delta \nu_n^{X, Y}   |^2 \Big] \\   
    &\quad
    +  
   \mathbb{E}\Big [ | R_{n+1} |^2\Big]  
          + 
          2 h( 1 + h \alpha_{1} )  \mathbb{E}\Big [e_n \Delta f_{h,n}^{ X, Y}  \Big]  
          +  
          2 \lambda h( 1 + h \alpha_{1} ) \mathbb{E}\Big [e_n  \Delta \nu_n^{X, Y}   \Big] 
          \\
           & \quad +  
          2 ( 1 + h \alpha_{1} ) \mathbb{E}\Big [ e_n  R_{n+1} \Big] 
              + 
              2 \lambda h^2 \mathbb{E}\Big [\Delta f_{h,n}^{X, Y}  \Delta \nu_n^{X, Y}  \Big] 
              + 
              2 h \mathbb{E}\Big [\Delta f_{h,n}^{X, Y}  R_{n+1} \Big]   
                  \\
             &\quad
                   + 
                   2 \mathbb{E}\Big [\Delta g_{h,n}^{X, Y} \Delta W_n  R_{n+1} \Big] 
                         +  2  \mathbb{E}\Big [\Delta \nu_n^{X, Y}  \Delta N_n R_{n+1}\Big], 
\end{split}
\end{equation}
Here, the last equality holds due to the facts that
\begin{equation}
     \E \Big[ \big| \Delta W_n \big|^2 \Big]=h,
     \ \
     \E \Big[ \big| \Delta N_n \big|^2 \Big]
     =
     \E [ | \Delta \Tilde{N}_n |^2 ]
          +
          | \lambda h |^2
          +
          2 \E [ \lambda h  \Delta \Tilde{N}_n ]
    =
    \lambda h + \lambda^2 h^2 
     ,
\end{equation}
where we denote $\Delta \Tilde{N}_n:= \Tilde{N}_{n+1}-\Tilde{N}_n$,
and additionally,
\begin{equation*}
    \begin{split}
    &  
         \E \Big[ e_n \Delta g_{h,n}^{ X, Y} \Delta W_n \Big] 
         = 0 ,\quad
     \E \Big[ \Delta f_{h,n}^{ X, Y} \Delta g_{h,n}^{ X, Y} \Delta W_n \Big] 
         = 0,
        \quad
          \E\Big[ \Delta g_{h,n}^{ X, Y} \Delta W_n   \Delta \nu_n^{X, Y} \Delta N_n   \Big] = 0,
    \end{split}
\end{equation*}
together with
\begin{align*}
        & 
        \E \Big[ \big|\Delta g_{h,n}^{X, Y} \Delta W_n \big|^2 \Big] 
        =
        \E \Big[ \big|\Delta g_{h,n}^{X, Y}  \big|^2 \Big]
        \cdot
        \E \Big[ \big| \Delta W_n \big|^2 \Big]
        = 
        h \E \Big[ \big|\Delta g_{h,n}^{X, Y} \big|^2 \Big] , 
\\
    &
    \E \Big[ \big|\Delta \nu_n^{X, Y} \Delta N_n \big|^2 \Big] 
    =
    \E \Big[ \big|\Delta \nu_n^{X, Y} \big|^2 \Big]
        \cdot
        \E [ | \Delta N_n |^2 ]
    =
     ( \lambda h + \lambda^2 h^2 ) 
       \E \Big[ \big|\Delta \nu_n^{X, Y} \big|^2 \Big],
       \\
&
  \E \Big[  e_n  \Delta \nu_n^{X, Y} \Delta N_n  \Big]
  =
    \E \Big[ e_n  \Delta \nu_n^{X, Y} \Delta \Tilde{N}_n  \Big]
    +
    \lambda h \E \Big[  e_n  \Delta \nu_n^{X, Y} \Big]
    =
    \lambda h \E \Big[  e_n  \Delta \nu_n^{X, Y} \Big],
    \\
    &
   \E \Big[ \Delta f_{h,n}^{X, Y} \Delta \nu_n^{X, Y} \Delta N_n  \Big] 
 =
   \E \Big[ \Delta f_{h,n}^{X, Y} \Delta \nu_n^{X, Y} \Delta \Tilde{N}_n  \Big] 
   +
   \lambda h \E \Big[ \Delta f_{h,n}^{X, Y} \Delta \nu_n^{X, Y} \Big] 
   =
   \lambda h \E \Big[ \Delta f_{h,n}^{X, Y} \Delta \nu_n^{X, Y} \Big],
\end{align*}
by observing that the terms $\Delta f_{h,n}^{X, Y}$, $\Delta g_{h,n}^{X, Y}$ and $\Delta \nu_n^{X, Y} $ are $\mathcal{F}_{t_n}$-measurable and are therefore independent of $\Delta W_n$ and $\Delta \tilde{N}_n$ for all $n \in \{0, 1,2, ...,N-1\}$.
Further, the Young inequality implies that
    \begin{align}
        2 h^2\alpha_{1}
        \E\Big[ e_n  \Delta f_{h,n}^{ X, Y} \Big]
        & \leq 
        h^2 \alpha_1^2\E\Big[ | e_n |^2 \Big]
          +
        h^2 \E\Big[ | \Delta f_{h,n}^{X, Y} |^2 \Big] , 
        \\
         2 \lambda h( 1 + h \alpha_{1} ) \mathbb{E}\Big [e_n  \Delta \nu_n^{X, Y}   \Big]
         &\leq
         \lambda h( 1 + h \alpha_{1} ) \mathbb{E}\Big [ |e_n |^2  \Big]
         +
          \lambda h( 1 + h \alpha_{1} ) \mathbb{E}\Big [| \Delta \nu_n^{X, Y}  |^2 \Big],
        \\
          2 \lambda h^2 \mathbb{E}\Big [\Delta f_{h,n}^{X, Y}  \Delta \nu_n^{X, Y}  \Big] 
          &\leq
          \lambda h^2 \mathbb{E}\Big [ |\Delta f_{h,n}^{X, Y} |^2 \Big] 
          +
          \lambda h^2 \mathbb{E}\Big [| \Delta \nu_n^{X, Y} |^2 \Big], 
    \end{align}
and
    \begin{align}  
        2 h \E\Big[ \Delta f_{h,n}^{X, Y}  R_{n+1} \Big] 
        & \leq
        h^2 \E\Big[ | \Delta f_{h,n}^{X, Y} |^2 \Big]
          +
        \E\Big[ | R_{n+1} |^2 \Big] ,
         \\
        2 \E\Big[ \Delta \nu_n^{X, Y} \Delta N_n  R_{n+1} \Big]
        &
        \leq
        ( \lambda h + \lambda^2 h^2 )
        \E \Big[ \big| \Delta \nu_n^{X, Y} \big|^2 \Big]
        +
        \E\Big[ | R_{n+1} |^2 \Big] ,
        \\
        2 \E\Big[ \Delta g_{h,n}^{ X, Y} \Delta W_n  R_{n+1} \Big]
        & \leq
        h (v-2) \E\Big[ | \Delta g_{h,n}^{ X, Y} |^2 \Big]
          +\tfrac{1}{v-2}
\E\Big[ |R_{n+1}|^2 \Big] ,
    \end{align}
where $v>2$ comes from \eqref{eq:assump.(x-y)(f_h(x) - f_h(y))[0.5AS25]}.
In view of the above estimates and the condition
\eqref{eq:nu_assmption1[0.5AS25]}, one immediately derives from \eqref{eq:square_of_e_n+1,1[0.5AS25]} that
\begin{equation}\label{E(square of V_j+1)[0.5AS25]}
\begin{split}
     \E\big[ | e_{n+1} |^2 \big] 
&\leq
    \big( 
       ( 1 + h \alpha_{1}  )^2
       +
       h^2 \alpha_{1}^2 
       +
       \lambda h( 1 + h \alpha_{1} )
    \big)
    \mathbb{E}\Big [ | e_n |^2  \Big] 
     +  
     (3 + \lambda) h^2
     \mathbb{E}\Big [|  \Delta f_{h,n}^{ X, Y}   |^2 \Big]  
 \\
    &\quad
    +
      (v-1)h 
      \mathbb{E}\Big [ |\Delta g_{h,n}^{ X, Y}   |^2 \Big]    
    +
   [2\lambda h + 2\lambda^2 h^2 + \lambda h^2 + \lambda h( 1 + h \alpha_{1} ) ]
   \mathbb{E}\Big [ |  \Delta \nu_n^{X, Y}   |^2 \Big] \\   
    &\quad
    +  
   (3 + \tfrac{1}{v-2}) \mathbb{E}\Big [ | R_{n+1} |^2\Big]   
          + 
          2 h  \mathbb{E}\Big [e_n \Delta f_{h,n}^{ X, Y}  \Big]  
          +  
          2 ( 1 + h \alpha_{1} ) \mathbb{E}\Big [ e_n  R_{n+1} \Big] 
\\
    &  \leq 
    \big[( 1 + h \alpha_{1} )^2  +  h^2 \alpha_1^2 + \lambda h (1 + h \alpha_1) 
    +
    ( 3\lambda h + 2\lambda^2 h^2 
     +
     \lambda h^2 
     +
     \alpha_1 \lambda h^2  )
     M^2
    \big]   
     \E\Big[ \big| e_n \big|^2 \Big]   
     \\
     & \quad
     +
     (3 + \lambda) h^2  \E\Big[ \big|  \Delta f_{h,n}^{ X, Y}  \big|^2 \Big]
     +
      2 h \bigg(\tfrac{v-1}{2}
\E\Big[ \big|\Delta g_{h,n}^{ X, Y} \big|^2 \Big]  
         +
         \E\Big[ e_n \Delta f_{h,n}^{ X, Y} \Big]  
     \bigg) \\
     & \quad
     +
     (3+\tfrac{1}{v-2}) \E\Big[ \big| R_{n+1} \big|^2 \Big]  
    +  
    2 ( 1 + h \alpha_{1} ) 
    \E\Big[ e_n  R_{n+1} \Big].  
\end{split}
\end{equation}
Using \eqref{eq:assump.f_h(x) - f_h(y)[0.5AS25]} and \eqref{eq:assump.(x-y)(f_h(x) - f_h(y))[0.5AS25]} leads to
\begin{equation}\label{iterateion[0.5AS25]}
\begin{aligned}
    \E\big[ | e_{n+1} |^2 \big] 
& 
\leq
     \big[( 1 + h \alpha_{1} )^2  +  h^2 \alpha_1^2 + \lambda h (1 + h \alpha_1) 
    +
    ( 3\lambda h + 2\lambda^2 h^2 
     +
     \lambda h^2 
     +
     \alpha_1 \lambda h^2  )
     M^2
    \big]   
     \E\Big[ \big| e_n \big|^2 \Big] 
\\
   &\quad +
    [ (3+\lambda) h^2 L_2^2 (1 + h^{-\frac12})^2 + 2 L_3 h] \E\Big[ \big| e_n \big|^2 \Big]
\\
     & \quad
     +
    (3+\tfrac{1}{v-2}) \E\Big[ \big| R_{n+1} \big|^2 \Big]  
    +  
    2 ( 1 + h \alpha_{1} ) 
    \E\Big[ e_n  R_{n+1} \Big]
\\
& \leq
(1 + Ch )\E\Big[ \big| e_n \big|^2 \Big]     
     +
      (3+\tfrac{1}{v-2})\E\Big[ \big| R_{n+1} \big|^2 \Big]  
    +  
    2 ( 1 + h \alpha_{1} ) 
    \E\Big[ 
        e_n 
        \cdot
        \E\big[ R_{n+1}| \mathcal{F}_{t_n} \big]  
    \Big],
\end{aligned}
\end{equation}
where we used the property of the conditional expectation in the second equality.
Further, utilizing the Young inequality gives
\begin{equation}
\begin{aligned}
    \E\big[ | e_{n+1} |^2 \big] 
    & \leq
   [1 + Ch +  h ( 1 + h \alpha_{1} )  ] 
     \E\Big[ \big| e_n \big|^2 \Big]   
     +
      (3+\tfrac{1}{v-2}) \E\Big[ \big| R_{n+1} \big|^2 \Big]  
    +
    \tfrac{1 + h \alpha_{1}}{h}
    \E\Big[ 
        \big\vert \E\big[ R_{n+1}| \mathcal{F}_{t_n} \big] \big\vert^2
    \Big] . 
\end{aligned}
\end{equation}
By iteration and observing that $e_0 = 0$, one finally obtains
\begin{equation}
\label{eq:relationship_error}
\begin{aligned}
    \E \big[ \vert e_{n+1} \vert^2 \big]
    & \leq
    (1+ C h)^{ n + 1} \E\big[ | e_0 |^2 \big]
\\
&\quad
    +
     \sum_{i=0}^{ n } (1+ C h)^{n-i}
  \left\{
   (3+\tfrac{1}{v-2})  \E\Big[ \big| R_{i+1} \big|^2 \Big]  
    +
   \tfrac{1 + h \alpha_{1}}{h} \E\Big[ 
        \big\vert \E\big[ R_{i+1}| \mathcal{F}_{t_i} \big] \big\vert^2\Big]
   \right\}
   \\
    & 
    \leq 
    C h^{-1}
    \left\{
   (3+\tfrac{1}{v-2})  
   \sup_{i \leq n+1, i \in \N}
   \E\Big[ \big| R_{i} \big|^2 \Big]  
    +
   \tfrac{1 + h \alpha_{1}}{h} 
   \sup_{i \leq n+1, i \in \N} 
   \E\Big[ 
        \big\vert \E\big[ R_{i}| \mathcal{F}_{t_{i-1} } \big] \big\vert^2\Big]
   \right\}.
\end{aligned}
\end{equation}
The desired result then follows.
\end{proof}

To obtain the convergence rate, it remains to bound the terms \(\|R_i\|_{L^2(\Omega; \mathbb{R})}\) and \(\|\mathbb{E}(R_i| \mathcal{F}_{t_{i-1}})\|_{L^2(\Omega; \mathbb{R})}\), \(i \in \{1, 2, \ldots, N\}\) 
 appearing in Theorem \ref{thm(main):Order_onehalf[0.5AS25]}.
These estimates are presented in the following lemma.

\begin{lem}\label{lem. R_n estimates [0.5AS25]}
    Let $\{ X_t \}_{ t\in [0,T] }$ and $\{ Y_n \}_{n=0}^N $ be the solutions of \eqref{eq:ait-sahalia[0.5AS25]} and \eqref{eq:numeri_method[0.5AS25]}, respectively. Let Assumption \ref{assump:modification_f-g[0.5AS25]} hold. 
    If one of the following conditions stands:
\begin{enumerate}[(1)]
    \item $r + 1 > 2 \rho$,
    \item \label{enumerate:theorem_critical_case[0.5AS25]}
    $r + 1 = 2 \rho$, $\tfrac{\alpha_2}{\sigma^2} \geq 2r - \tfrac12$,
\end{enumerate}
    then there exists a constant $C$ independent of $h$ such that for all $n \in \{0,1,...,N-1 \} $, $N \in \N$,
    \begin{equation}
         \| R_{n+1} \|_{L^2(\Omega;\R)}
        \leq
        C h,
        \quad
        \| \E [ R_{n+1} | \F_{t_n} ] \|_{L^2(\Omega;\R)}
        \leq
        C h^{\frac{3}{2}}.
    \end{equation}
\end{lem}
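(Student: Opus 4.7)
The natural approach is to decompose $R_{n+1}$ according to the three integrators in \eqref{R_i[0.5AS25]}:
\begin{equation}
    R_{n+1} = I^{(d)}_{n+1} + I^{(W)}_{n+1} + I^{(N)}_{n+1},
\end{equation}
where $I^{(d)}_{n+1}$ is the $\mathrm{d}s$-integral, $I^{(W)}_{n+1} = \int_{t_n}^{t_{n+1}}(g(X_s)-g_h(X_{t_n}))\,\mathrm{d}W_s$ and $I^{(N)}_{n+1} = \int_{t_n}^{t_{n+1}}(\nu(X_{s^-})-\nu(X_{t_n}))\,\mathrm{d}N_s$. For each integrand of the form $\phi(X_s) - \phi_h(X_{t_n})$ with $\phi \in \{f,g\}$ I would further split
\begin{equation}
    \phi(X_s) - \phi_h(X_{t_n}) = [\phi(X_s) - \phi(X_{t_n})] + [\phi(X_{t_n}) - \phi_h(X_{t_n})],
\end{equation}
so that the first bracket is handled by the time regularity of $X$ from Lemma~\ref{lem:Holder_continuous_X[0.5AS25]} (combined with the mean-value theorem and Cauchy--Schwarz) while the second bracket is controlled directly by \eqref{eq:assump.f(x) - f_h(x)[0.5AS25]} together with the polynomial moment bounds from Lemma~\ref{lem:moment_case1[0.5AS25]}.

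For the $L^2$-bound I would apply Minkowski to $I^{(d)}_{n+1}$: each $s$-wise integrand is $O(h^{1/2})$, so $\|I^{(d)}_{n+1}\|_{L^2} = O(h^{3/2})$. The It\^o isometry gives $\|I^{(W)}_{n+1}\|_{L^2}^2 = \int_{t_n}^{t_{n+1}}\|g(X_s)-g_h(X_{t_n})\|_{L^2}^2\,\mathrm{d}s = O(h^2)$. For $I^{(N)}_{n+1}$ I would write $\mathrm{d}N_s = \mathrm{d}\tilde N_s + \lambda\,\mathrm{d}s$; the jump-It\^o isometry $\mathbb{E}[|\int\phi\,\mathrm{d}\tilde N_s|^2] = \lambda\int\mathbb{E}[\phi^2]\,\mathrm{d}s$ combined with the Lipschitz property \eqref{eq:nu_assmption1[0.5AS25]} and Lemma~\ref{lem:Holder_continuous_X[0.5AS25]} yields $O(h)$ in $L^2$ for the compensated part and $O(h^{3/2})$ for the drift part. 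Summing all contributions gives $\|R_{n+1}\|_{L^2}\leq Ch$. The conditional estimate is then almost immediate: the martingale property of $W$ and $\tilde N$ with respect to $\{\mathcal{F}_t\}$ kills $\mathbb{E}[I^{(W)}_{n+1}\vert\mathcal{F}_{t_n}]$ and the compensated component of $I^{(N)}_{n+1}$, so $\mathbb{E}[R_{n+1}\vert\mathcal{F}_{t_n}]$ reduces to the two remaining $\mathrm{d}s$-integrals whose $O(h^{3/2})$ bounds follow from conditional Jensen's inequality and the $L^2$ estimates already obtained.

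The main obstacle sits in the critical case $r+1=2\rho$, where Lemma~\ref{lem:moment_case1[0.5AS25]} restricts the admissible positive moments of $X$ to order at most $1 + 2\alpha_2/\sigma^2$. The hypothesis $\alpha_2/\sigma^2 \geq 2r-\tfrac12$ is precisely calibrated so that this ceiling reaches $4r$, which is exactly what is needed to bound $\|\phi(X_{t_n})-\phi_h(X_{t_n})\|_{L^2}$ via the $(1+|x|^{2r})$-growth in \eqref{eq:assump.f(x) - f_h(x)[0.5AS25]}; the same moment budget also covers the mean-value estimate $|f(X_s)-f(X_{t_n})|\leq \alpha_2 r(X_s^{r-1}+X_{t_n}^{r-1})|X_s-X_{t_n}|$ after pairing it with a mild upgrade of Lemma~\ref{lem:Holder_continuous_X[0.5AS25]} to higher $L^q$-increments, which is itself available under the same moment ceiling through BDG. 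Verifying that the single condition $\alpha_2/\sigma^2\geq 2r-\tfrac12$ simultaneously furnishes all of these moments (and for both $f$ and $g$) is the delicate accounting at the heart of the argument.
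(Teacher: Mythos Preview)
Your proposal is correct and follows the same overall architecture as the paper: the same three-way split by integrator, the same use of the It\^o and compensated-Poisson isometries for $I^{(W)}$ and $I^{(N)}$, and the same elimination of martingale pieces in the conditional expectation. The one genuine methodological difference is in how the nonlinear time increments $\phi(X_s)-\phi(X_{t_n})$, $\phi\in\{f,g\}$, are controlled. You propose the mean-value theorem plus Cauchy--Schwarz, which forces you to upgrade Lemma~\ref{lem:Holder_continuous_X[0.5AS25]} to $L^q$ H\"older continuity for some $q>2$ (as you note). The paper instead applies the generalized It\^o formula with jumps directly to $\phi(X_s)-\phi(X_{t_n})$, expanding each as a sum of Lebesgue, Wiener, and Poisson integrals of explicit integrands built from $\phi',\phi'',F,g,\nu$; after an It\^o isometry this reduces everything to raw polynomial moment bounds of $X$ and $X^{-1}$, with the $L^2$ H\"older continuity of Lemma~\ref{lem:Holder_continuous_X[0.5AS25]} invoked only for the linear-in-$X$ and $X^{-1}$ pieces of the drift integral. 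Both routes ultimately bottleneck at the same moment requirement $\sup_t\mathbb{E}[X_t^{4r}]<\infty$ (driven by the $(1+|x|^{2r})$ bound in \eqref{eq:assump.f(x) - f_h(x)[0.5AS25]}), so the calibration $\alpha_2/\sigma^2\geq 2r-\tfrac12$ is sharp either way; the paper's It\^o-formula route is simply more self-contained, since it avoids proving an auxiliary higher-order regularity lemma that would itself consume the same moment budget.
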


\begin{proof}

By the Minkowski inequality, we split $\| R_i \|_{L^2(\Omega;R)},i\in \{1,2,...,N\}$ into three terms as
  \begin{equation}\label{eq:R_i(r+1>2rho)[0.5AS25]}
    \begin{split}
        \| R_i \|_{L^2(\Omega;\R)}
        & \leq
         \underbrace{
         \bigg\|   
                 \int_{t_{i-1}}^{t_i}  
                        [ \alpha_{-1} X_{s}^{-1} - \alpha_{-1}   X_{ t_{i} } ^{-1} + \alpha_1 X_s -  \alpha_1  X_{ t_{i-1} } + f(X_s) - f_h(  X_{ t_{i-1} } ) ]  \mathrm{~d}s  
         \bigg\|_{L^2(\Omega;\R)} 
                                    }_{ = : I_1 }   \\
        & \qquad
         +   
         \underbrace{
         \bigg\|   
                  \int_{t_{i-1}}^{t_i} 
                     [ g(X_s) - g_h(  X_{ t_{i-1} } )  ] \mathrm{~d}W_s  
         \bigg\|_{L^2(\Omega;\R)}   
                                       }_{ = : I_2 }
          +   
         \underbrace{
         \bigg\|   
                  \int_{t_{i-1}}^{t_i} 
                     [ \nu(X_{s^-}) - \nu(  X_{ t_{i-1} } ) ] \mathrm{~d}N_s  
         \bigg\|_{L^2(\Omega;\R)}   
                                       }_{ = : I_3 }     .
    \end{split}
    \end{equation}
Under Assumption \ref{assump:modification_f-g[0.5AS25]}, the first term $I_1$ can be estimated as
\begin{equation}\label{eq:I_1[0.5AS25]}
    I_1
    \leq
  C h
    \Big(   
    1 
    +
    \sup_{ t \in [0,T] } 
    \Vert X_t^{-1} \Vert_{ L^{ 2}(\Omega;\R) }
    +
    \sup_{ t \in [0,T] } \| X_t \|^{ r }_{ L^{ 2 r }(\Omega;\R) }   \Big).   
\end{equation}
For the term $I_{3}$,
employing the Young inequality, the It\^o isometry, the inequality
\eqref{eq:nu_assmption1[0.5AS25]} and Lemma \ref{lem:Holder_continuous_X[0.5AS25]} gives
\begin{equation}\label{eq:I_3[0.5AS25]}
    \begin{split}
        |I_{3}|^2
        &
        =
        \bigg\|   
                  \int_{t_{i-1}}^{t_i} 
                     [ \nu(X_{s^-}) - \nu(  X_{ t_{i-1} } ) ] 
               \mathrm{~d}( \Tilde{N}_s + \lambda s)  
         \bigg\|^2_{L^2(\Omega;\R)}   
    \\
        & \leq
        2
        \bigg\|   
                  \int_{t_{i-1}}^{t_i} 
                     [ \nu(X_{s^-}) - \nu(  X_{ t_{i-1} } ) ] 
               \mathrm{~d} \Tilde{N}_s  
         \bigg\|^2_{L^2(\Omega;\R)} 
         +
         2 \lambda^2
         \bigg\|   
                  \int_{t_{i-1}}^{t_i} 
                     [ \nu(X_{s^-}) - \nu(  X_{ t_{i-1} } ) ] 
               \mathrm{~d}s
         \bigg\|^2_{L^2(\Omega;\R)} 
     \\
         & \leq
        ( 
        2 \lambda
        +
        2 \lambda^2 h
        )
        \int_{t_{i-1}}^{t_i}
         \| \nu(X_{s^-}) - \nu( X_{ t_{i-1} } )  \|^2_{L^2(\Omega;\R)}
         \mathrm{~d}s    
    \\
         & \leq
        C M^2
        \int_{t_{i-1}}^{t_i}
         \| X_{s^-} -  X_{ t_{i-1} } \|^2_{L^2(\Omega;\R)}
         \mathrm{~d}s   
    \\
    &  \leq
    C h^2.
    \end{split}
\end{equation}
A more delicate estimate is required for the term $I_2$.
To this end, we introduce a simplified notation
\begin{equation}
    F(x) := \alpha_{-1} x^{-1} - \alpha_0 + \alpha_1 x - \alpha_2 x^r, 
    \ \
    x \in \mathbb{R}.
\end{equation}
By adding and subtracting the terms 
$g(X_{t_{i-1}})$ to the integrand of $I_2$, 
and making use of the It\^o isometry
as well as the Young inequality one gets
\begin{equation}
    |I_{2}|^2
        \leq
        2 \int_{t_{i-1}}^{t_i}
         \| g(X_s) - g( X_{ t_{i-1} } )  \|^2_{L^2(\Omega;\R)}
         \mathrm{~d}s
       +
         2 \int_{t_{i-1}}^{t_i}
         \| g(  X_{ t_{i-1} } ) - g_h( X_{ t_{i-1} })
            \|^2_{L^2(\Omega;\R)} 
        \mathrm{~d}s  .    
\end{equation}
The generalized It\^o formula \cite{gardon2004order} and \eqref{eq:assump.f(x) - f_h(x)[0.5AS25]} then lead to
\begin{equation}
    \begin{split}
        |I_{2}|^2
& \leq
        2 \int_{t_{i-1}}^{t_i}
        \bigg\| 
        \int_{t_{i-1}}^{s} g'(X_l) F(X_l) \mathrm{~d}l
        +
        \int_{t_{i-1}}^{s} g'(X_l) g(X_l) \mathrm{~d}W_l
        +
        \frac12
        \int_{t_{i-1}}^{s}
         g''(X_l) g(X_l)^2 \mathrm{~d}l
\\
&\quad
        +
        \int_{t_{i-1}}^{s}
        [ g( X_{l^-} + \nu(X_{l^-})) - g(X_{l^-} ) ] \mathrm{~d} N_l
        \bigg\|^2_{L^2(\Omega;\R)}
         \mathrm{~d}s
         +
         4 L_1^2 h^2
       \big( 1 
         +
         \|X_{ t_{i-1} }\|^{4r}_{L^{4r}(\Omega;\R)} 
         \big).
\end{split}
\end{equation}
Another application of the Young inequality gives
\begin{equation}
    \begin{split}
        |I_{2}|^2
&\leq
        8 \int_{t_{i-1}}^{t_i}
        \bigg\| 
        \int_{t_{i-1}}^{s} g'(X_l) F(X_l) \mathrm{~d}l
        \bigg\|^2_{L^2(\Omega;\R)} 
        \mathrm{~d}s
        +
        8 \int_{t_{i-1}}^{t_i}
        \bigg\| 
        \int_{t_{i-1}}^{s} g'(X_l) g(X_l) \mathrm{~d}W_l
        \bigg\|^2_{L^2(\Omega;\R)} 
        \mathrm{~d}s
\\
&\quad
        +
        2 \int_{t_{i-1}}^{t_i}
        \bigg\|
        \int_{t_{i-1}}^{s}
         g''(X_l) g(X_l)^2 \mathrm{~d}l
        \bigg\|^2_{L^2(\Omega;\R)} 
        \mathrm{~d}s
\\
&\quad
        +
        8 \int_{t_{i-1}}^{t_i}
        \bigg\|
        \int_{t_{i-1}}^{s}
        [ g( X_{l^-} + \nu(X_{l^-})) - g(X_{l^-} ) ] \mathrm{~d} ( \Tilde{N}_l + \lambda l )
        \bigg\|^2_{L^2(\Omega;\R)} 
        \mathrm{~d}s
\\
&\quad
         +
         4 L_1^2 h^2
       \big( 1 
         +
         \|X_{ t_{i-1} }\|^{4r}_{L^{4r}(\Omega;\R)} \big).
\end{split}
\end{equation}

Furthermore, in view of the H\"older inequality, 
it follows that
\begin{equation}\label{eq:I_2[0.5AS25]}
    \begin{split}
|I_2|^2
&\leq
        8 h \int_{t_{i-1}}^{t_i}
        \int_{t_{i-1}}^{s} 
        \| 
        g'(X_l) F(X_l) 
        \|^2_{L^2(\Omega;\R)} 
        \mathrm{~d}l \mathrm{d}s
        +
        8 \int_{t_{i-1}}^{t_i}
        \int_{t_{i-1}}^{s} 
        \| 
        g'(X_l) g(X_l)
        \|^2_{L^2(\Omega;\R)} 
        \mathrm{~d} l \mathrm{d}s
\\
&\quad
        +
        2 h \int_{t_{i-1}}^{t_i}
        \int_{t_{i-1}}^{s}
        \|
         g''(X_l) g(X_l)^2 
        \|^2_{L^2(\Omega;\R)} 
        \mathrm{~d}l \mathrm{d}s
\\
&\quad
        +
        16 \lambda \int_{t_{i-1}}^{t_i}
        \int_{t_{i-1}}^{s}
        \|
        g( X_{l^-} + \nu(X_{l^-})) - g(X_{l^-} ) 
        \|^2_{L^2(\Omega;\R)} 
        \mathrm{~d} l \mathrm{d}s
\\
&\quad
        +
        16 \lambda^2 h \int_{t_{i-1}}^{t_i}
        \int_{t_{i-1}}^{s}
        \|
        g( X_{l^-} + \nu(X_{l^-})) - g(X_{l^-} ) 
        \|^2_{L^2(\Omega;\R)} )
        \mathrm{~d}l \mathrm{d}s
\\
&\quad
         +
         4 L_1^2 h^2
       \big( 1 
         +
         \|X_{ t_{i-1} }\|^{4r}_{L^{4r}(\Omega;\R)} 
         \big)
\\
&\leq
C h^2 \Big(1 + \mathbbm{1}_{ \{ \rho<2\} } \sup_{t \in [0,T]} \|X_{t}^{-1}\|^{4-2\rho}_{L^{4-2\rho}(\Omega;\R)} + \sup_{t \in [0,T]} \|X_{t}\|^{4r}_{L^{4r}(\Omega;\R)} \Big),
    \end{split}
\end{equation}
where we utilized the polynomial growth of $g$ and the linear growth condition of $\nu$ \eqref{eq:nu_assmption3[0.5AS25]} in the last inequality.
A combination of \eqref{eq:I_1[0.5AS25]},  \eqref{eq:I_3[0.5AS25]} and \eqref{eq:I_2[0.5AS25]} 
together with the Lyapunov inequality yields
\begin{equation}\label{eq:R_i_L2_estimate[0.5AS25]}
    \| R_i \|_{L^2(\Omega;\R)} 
    \leq
      C h
      \Big( 
          1 
       +
         \Vert X_t^{-1} \Vert_{ L^{ 2 }(\Omega;\R) }
       +
         \sup_{ t \in [0,T] } \| X_t \|^{ 2r }_{L^{ 4r }(\Omega;\R)}
       \Big)
    \leq Ch,
\end{equation}
where we employed Lemma \ref{lem:moment_case1[0.5AS25]} due to the fact that $4r \leq \tfrac{\sigma^2 + 2 \alpha_2}{\sigma^2}$, which is equivalent to
$\tfrac{\alpha_2}{\sigma^2} \geq 2r - \tfrac12$ and guarantees the required moment bound for the general critical case $r + 1 = 2 \rho$.

Now we turn to the estimations for $\| \E (R_i| \F_{i-1}) \|_{L^2(\Omega;\R)}, i\in \{1,2,...,N\}$. Using basic properties of the conditional expectation, one has
\begin{equation}\label{eq:I_2 vanish[0.5AS25]}
        \E  
        \bigg(  
         \int_{t_{i-1}}^{t_i} 
             \big[ g(X_s) - g_h(  X_{ t_{i-1} } ) 
         \big] \,\mathrm{d}W_s 
         \Big|  \F_{t_{i-1}} 
         \bigg)
        =
        0.
\end{equation}
The Young inequality gives
\begin{equation}
\begin{aligned}
&
    \Bigg\|
\mathbb{E} 
\bigg( 
\int_{t_{i-1}}^{t_i} 
\big[ \nu\left(X_{s^{-}}\right) 
-
\nu \left(X_{t_{i-1}}\right) \big] \mathrm{d} N_s \mid \F_{t_{i-1}}
\bigg) 
\Bigg\|_{L^2(\Omega ; \mathbb{R})}^2 
\\
& \leq 
2\left\|\mathbb{E}
\left(
\int_{t_{i-1}}^{t_i} 
\big[ \nu\left(X_{s^{-}}\right) -\nu \left(X_{t_{i-1}}\right)  \big]
\mathrm{d} \tilde{N}_s \mid \F_{t_{i-1}}\right)\right\|_{L^2(\Omega ; \mathbb{R})}^2 
\\
& \quad
+2\left\|
\lambda 
\mathbb{E}\left(
\int_{t_{i-1}}^{t_i} 
\big[ \nu\left(X_{s^{-}}\right) -\nu \left(X_{t_{i-1}}\right) \big]
\mathrm{d} s \mid \F_{t_{i-1}}\right)\right\|_{L^2(\Omega ; \mathbb{R})}^2\\
& =
2 \lambda^2 
\left\|
\int_{t_{i-1}}^{t_i} 
\big[ \nu\left(X_{s^{-}}\right) -\nu \left(X_{t_{i-1}}\right) \big]
\mathrm{d} s 
\right\|_{L^2(\Omega ; \mathbb{R})}^2.
\end{aligned}
\end{equation}
The H\"older inequality, the Lipschitz condition of $\nu$ \eqref{eq:nu_assmption1[0.5AS25]} and Lemma \ref{lem:Holder_continuous_X[0.5AS25]} further lead to
\begin{equation}
\begin{aligned}
&
\Bigg\|
\mathbb{E}
\bigg(
\int_{t_{i-1}}^{t_i} 
\big[ \nu\left(X_{s^{-}}\right) 
-
\nu  \left(X_{t_{i-1}}\right)  \big] \mathrm{d} N_s \mid \F_{t_{i-1}}
\bigg)
\Bigg\|_{L^2(\Omega ; \mathbb{R})}^2 
\\
\leq 
& 2 \lambda^2 h 
\int_{t_{i-1}}^{t_i}\left\|
\nu\left(X_{s^{-}}\right) -\nu \left(X_{t_{i-1}}\right)
\right\|_{L^2(\Omega ; \mathbb{R})}^2 \mathrm{~d} s \\
\leq 
& C M^2 h 
\int_{t_{i-1}}^{t_i}\left\|X_{s^{-}}-X_{t_{i-1}}\right\|_{L^2(\Omega ; \mathbb{R})}^2 \mathrm{~d} s \\
\leq 
& C h^3 .
\end{aligned}
\end{equation}
Moreover, following the same line of estimation arguments as in the analysis of the term $I_2$,
one obtains for any $s \in [t_{i-1}, t_i]$ that
\begin{equation}
    \begin{split}
    &
      \| f(X_s) 
        - 
        f( X_{ t_{i-1} } ) 
     \|^2_{L^2(\Omega;\R)}  \\
&\leq
        4 
        \bigg\| 
        \int_{t_{i-1}}^{s} f'(X_l) F(X_l) \mathrm{~d}l
        \bigg\|^2_{L^2(\Omega;\R)} 
        +
        4
        \bigg\| 
        \int_{t_{i-1}}^{s} f'(X_l) g(X_l) \mathrm{~d}W_l
        \bigg\|^2_{L^2(\Omega;\R)} 
\\
&\quad
        +
        \bigg\|
        \int_{t_{i-1}}^{s}
         f''(X_l) g(X_l)^2 \mathrm{~d}l
        \bigg\|^2_{L^2(\Omega;\R)} 
        +
        4 
        \bigg\|
        \int_{t_{i-1}}^{s}
        [ f( X_{l^-} + \nu(X_{l^-})) - f(X_{l^-} ) ] \mathrm{~d} N_l
        \bigg\|^2_{L^2(\Omega;\R)} 
\\
&\leq
        4 h 
        \int_{t_{i-1}}^{s} 
        \bigg\| 
        f'(X_l) F(X_l) 
        \bigg\|^2_{L^2(\Omega;\R)} 
        \mathrm{~d}l
        +
        4
        \int_{t_{i-1}}^{s} 
        \bigg\| 
        f'(X_l) g(X_l)
        \bigg\|^2_{L^2(\Omega;\R)} 
        \mathrm{~d} l
\\
&\quad
        +
         h
        \int_{t_{i-1}}^{s}
        \bigg\|
         f''(X_l) g(X_l)^2 
        \bigg\|^2_{L^2(\Omega;\R)} 
        \mathrm{~d}l
        +
        8 \lambda
        \int_{t_{i-1}}^{s}
        \bigg\|
        f( X_{l^-} + \nu(X_{l^-})) - f(X_{l^-} ) 
        \bigg\|^2_{L^2(\Omega;\R)} 
        \mathrm{~d} l
\\
&\quad
        +
        8 \lambda^2 h 
        \int_{t_{i-1}}^{s}
        \bigg\|
        f( X_{l^-} + \nu(X_{l^-})) - f(X_{l^-} ) 
        \bigg\|^2_{L^2(\Omega;\R)} 
        \mathrm{~d} l
\\
&\leq
Ch \Big(1  + \mathbbm{1}_{ \{ r<2\} } \sup_{t \in [0,T]} \|X_{t}^{-1}\|^{4-2r}_{L^{4-2r}(\Omega;\R)}  +\sup_{t \in [0,T]} \|X_{t}\|^{4r-2}_{L^{4r-2}(\Omega;\R)} \Big).
    \end{split}
\end{equation}
Based on the preceding estimates, 
by adding and subtracting the terms
$f(X_{t_{i-1}})$
to the integrands of $R_i$
one shows that
\begin{equation}\label{eq:Ri|F_i-1<Ch^2[0.5AS25]}
\begin{split}
    \| \E (R_i| \F_{i-1}) \|_{L^2(\Omega;\R)}
    &
    \leq
    \alpha_{-1}
       \int_{t_{i-1}}^{t_i}
       \|  X_{s}^{-1} - X_{ t_{i} } ^{-1} 
       \|_{L^2(\Omega;\R)} 
       \mathrm{~d}s
    +
    \alpha_1 
    \int_{t_{i-1}}^{t_i}
    \| 
       X_s -  X_{ t_{i-1} } 
    \|_{L^2(\Omega;\R)}
    \mathrm{~d}s 
    \\
    & \quad
    +
      \int_{t_{i-1}}^{t_i}
    \| f(X_s) 
        - 
        f( X_{ t_{i-1} } ) 
     \|_{L^2(\Omega;\R)}
    \mathrm{~d}s 
    +
    \int_{t_{i-1}}^{t_i}
    \big\| f( X_{ t_{i-1} } ) - f_h(  X_{ t_{i-1} } )
    \big\|_{L^2(\Omega;\R)}
    \mathrm{~d}s
    \\
    & \quad
  +
     \left\|
\mathbb{E}
\left(
\int_{t_{i-1}}^{t_i} 
\nu\left(X_{s^{-}}\right) -\nu \left(X_{t_{i-1}}\right) \mathrm{d} N_s \mid \F_{t_{i-1}}
\right)
\right\|_{L^2(\Omega ; \mathbb{R})}
        \\
    & 
    \leq
   Ch^{\frac32} \Big(1  + \mathbbm{1}_{ \{ r< 2
   \} } \sup_{t \in [0,T]} \|X_{t}^{-1}\|^{2-r}_{L^{4-2r}(\Omega;\R)}  +\sup_{t \in [0,T]} \|X_{t}\|^{2r}_{L^{4r}(\Omega;\R)} \Big)
 ,
\end{split}
\end{equation}
where  
\ref{lem:Holder_continuous_X[0.5AS25]} and
\eqref{eq:assump.f(x) - f_h(x)[0.5AS25]} were also used in the last inequality.
Similar to \eqref{eq:R_i_L2_estimate[0.5AS25]},
employing Lemma \ref{lem:moment_case1[0.5AS25]} yields
    \begin{equation}
        \| \E [ R_{n+1} | \F_n ] \|_{L^2(\Omega;\R)}
        \leq
        C h^\frac32, 
        \quad
        \text{for all } \ n \in \{0,1,...,N-1 \} , N \in \N.
    \end{equation}
This completes the proof.
\end{proof}

Bearing Theorem \ref{thm(main):Order_onehalf[0.5AS25]} and Lemma \ref{lem. R_n estimates [0.5AS25]} in mind, 
the strong convergence result of order $1/2$ can be obtained directly.

\begin{thm}\label{thm(revised_last):Order_onehalf[0.5AS25]}
    Let $\{ X_t \}_{ t\in [0,T] }$ and $\{ Y_n \}_{n=0}^N $ be the solutions of \eqref{eq:ait-sahalia[0.5AS25]} and \eqref{eq:numeri_method[0.5AS25]}, respectively. Let Assumption 
    \ref{assump:modification_f-g[0.5AS25]} 
    hold. 
If one of the following conditions stands:
\begin{enumerate}[(1)]
    \item $r + 1 > 2 \rho$,
    \item $r + 1 = 2 \rho$, $\tfrac{\alpha_2}{\sigma^2} 
    \geq
    2r - \tfrac12$,
\end{enumerate}
    then there exists a constant $C$ independent of $h$, such that for all $n \in \{1,2,...,N \} $, $N \in \N$,
\begin{equation}
    \| X_{ t_{n} } - Y_n \|_{L^2(\Omega;\R)} 
    \leq C 
    h^\frac12.
\end{equation}

\end{thm}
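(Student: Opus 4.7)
The plan is to obtain this theorem as a direct consequence of the two preceding results, so the proof should be a short combining argument rather than a fresh calculation. The idea is to take the abstract error bound
\begin{equation*}
    \| X_{ t_{n} } - Y_n \|_{L^2(\Omega;\R)}
    \leq
    C \Big( h^{-\frac{1}{2}}
    \sup_{i}\| R_{i} \|_{L^2(\Omega;\R)}
    + h^{-1} \sup_{i}\| \E[R_{i}\mid\F_{t_{i-1}}] \|_{L^2(\Omega;\R)} \Big)
\end{equation*}
from Theorem \ref{thm(main):Order_onehalf[0.5AS25]}, which depends only on Assumption \ref{assump:modification_f-g[0.5AS25]}, and feed it the residual estimates from Lemma \ref{lem. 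R_n estimates [0.5AS25]}.

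First I would verify that the hypotheses of Lemma \ref{lem. R_n estimates [0.5AS25]} are exactly those appearing here: the non-critical case $r+1>2\rho$ and the critical case $r+1=2\rho$ with $\tfrac{\alpha_2}{\sigma^2}\ge 2r-\tfrac12$. This alignment is deliberate, since the moment bound requirement $4r\le \tfrac{\sigma^2+2\alpha_2}{\sigma^2}$ needed to control the $\|X_t\|_{L^{4r}}$-type terms arising in the residual analysis (through Lemma \ref{lem:moment_case1[0.5AS25]}) is equivalent to $\tfrac{\alpha_2}{\sigma^2}\ge 2r-\tfrac12$. Under these conditions, Lemma \ref{lem. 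R_n estimates [0.5AS25]} supplies the two bounds
\begin{equation*}
    \sup_{1\le i\le N}\| R_{i} \|_{L^2(\Omega;\R)} \le C h,
    \qquad
    \sup_{1\le i\le N}\| \E[R_{i}\mid\F_{t_{i-1}}] \|_{L^2(\Omega;\R)} \le C h^{\frac{3}{2}}.
\end{equation*}

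Substituting these two estimates into the bound from Theorem \ref{thm(main):Order_onehalf[0.5AS25]} gives
\begin{equation*}
    \| X_{ t_{n} } - Y_n \|_{L^2(\Omega;\R)}
    \le C \big( h^{-\frac12}\cdot h + h^{-1}\cdot h^{\frac32} \big)
    = C h^{\frac12},
\end{equation*}
uniformly in $n\in\{1,\dots,N\}$. This is the desired conclusion, and no further work is needed.

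Since the genuinely delicate estimates were absorbed into Theorem \ref{thm(main):Order_onehalf[0.5AS25]} and Lemma \ref{lem. R_n estimates [0.5AS25]}, there is no real obstacle in the present proof; the only point that would require a moment of attention is the bookkeeping showing that the $h^{-1/2}$ and $h^{-1}$ prefactors in the abstract error bound are exactly compensated by the $O(h)$ and $O(h^{3/2})$ residual estimates, respectively, which is why obtaining the sharper conditional estimate $O(h^{3/2})$ (rather than merely $O(h)$) in Lemma \ref{lem. R_n estimates [0.5AS25]} was essential for recovering the full order $1/2$.
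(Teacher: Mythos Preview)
Your proposal is correct and matches the paper's approach exactly: the paper states that the result ``can be obtained directly'' by bearing Theorem \ref{thm(main):Order_onehalf[0.5AS25]} and Lemma \ref{lem. R_n estimates [0.5AS25]} in mind, and gives no further proof. Your substitution $h^{-1/2}\cdot h + h^{-1}\cdot h^{3/2} = Ch^{1/2}$ is precisely what is intended.
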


\begin{rem}
    We point out that the parameter condition $\tfrac{\alpha_2}{\sigma^2} \geq 2r - \tfrac{1}{2}$ imposed in Lemma \ref{lem. R_n estimates [0.5AS25]} and Theorem \ref{thm(revised_last):Order_onehalf[0.5AS25]} is stricter than \eqref{eq:parameter_conditions[0.5AS25]},
    since
\[
\tfrac{\alpha_2}{\sigma^2} \geq 2r - \tfrac{1}{2} > \tfrac{r + 3}{8} \geq \tfrac{1}{8}(r + \tfrac{1}{r} + 2).
\]
Therefore, it suffices to check the conditions in Theorem \ref{thm(revised_last):Order_onehalf[0.5AS25]} to attain the final convergence rate.

\end{rem}

\section{Numerical experiments}\label{section:numericalexperiments[0.5AS25]}

This section is devoted to 
some numerical experiments to verify the previous theoretical results. 
Consider a generalized  A\"{i}t-sahalia-type interest rate model with Poisson jumps of the form
\begin{equation}
     \mathrm{d} X_t 
     =
     ( \alpha_{-1} X_t^{-1} - \alpha_0 + \alpha_1 X_t - \alpha_2 X_t^r ) \mathrm{~d}t 
     +
     \sigma X_t^{\rho} \mathrm{~d} W_t
     +
     \nu( X_{t^-} ) \mathrm{~d} N_t
     ,
     \ t >0, \quad  X_0 = 1.
\end{equation}
Here, we take $\nu(x)=0.5x, x \in \R$
so that \eqref{eq:nu_assmption1[0.5AS25]}-\eqref{eq:nu_assmption2[0.5AS25]} are satisfied. 
Two sets of parameters are chosen, which meet the conditions required by Theorem \ref{thm(revised_last):Order_onehalf[0.5AS25]}:

\begin{itemize}
    \item Example 1 (non-critical case $r + 1 > 2\rho$): $\alpha_{-1} = \frac{3}{2}$, $\alpha_0 = 2$, $\alpha_1 = 1$, $\alpha_2 = 3$, $\sigma = 1$, $r = 5$, $\rho = 2$;
    \item Example 2 (critical case $r = 3$, $\rho = 2$): $\alpha_{-1} = \frac{3}{2}$, $\alpha_0 = 2$, $\alpha_1 = 1$, $\alpha_2 = 3$, $\sigma = 1$, $r = 3$, $\rho = 2$.
\end{itemize}

Throughout the tests, two types of the modification functions \(f_h\) and \(g_h\), as described in Example \ref{example1} and Example \ref{example2[0.5AS25]}, are employed to produce the numerical approximations.
The corresponding schemes are listed as follows:
\begin{itemize}
    \item 
    The tamed Euler method (TEM):
    for $n \in \{0, 1, 2, ..., N - 1\}$, 
    \begin{align}
 \label{form1}
 Y_{n+1} 
=
Y_n + (\alpha_{-1} Y_{n+1}^{-1} - \alpha_0 + \alpha_1 Y_n - \frac{\alpha_2 Y_n^r h }{1+h^\frac{1}{2}Y_n^r})h 
 + \frac{\sigma Y_n^\rho}{1+h^\frac{1}{2}Y_n^r} \Delta W_n  +\nu( Y_n ) \Delta N_n,
\end{align}
    \item 
    The projected Euler method (PEM):
    set $\P_h(x) = \min\{1, h^{-\frac{1}{2r - 2}} |x|^{-1}\} x, x \in \R$. For $n \in \{0, 1, 2, ..., N - 1\}$, 
    \begin{equation}
Y_{n+1} =  Y_n + (\alpha_{-1} Y_{n+1}^{-1} - \alpha_0 + \alpha_1 Y_n - \alpha_2 (\P_h(Y_n))^r h 
 + \sigma (\P_h(Y_n))^\rho \Delta W_n  +\nu( Y_n ) \Delta N_n.
\end{equation}
\end{itemize}

In what follows,
we investigate the strong convergence rates of the proposed schemes over a fixed time interval
\( T = 1 \), with expectations approximated using $10^4$ sample paths.
The numerical approximations are generated by the two methods described above as well as the backward Euler-Maruyama method (BEM) \cite{zhao2021backward} under a sequence of time step sizes $h = 2^{-i},i = 5, 6, 7, 8, 9, 10$.
For comparison, the “exact" solutions are simulated by BEM, which is proven to be strongly convergent with order $1/2$ for the considered model, with a sufficiently small  step-size \( h_{\text{exact}} = 2^{-14} \). 
The estimation error is computed in terms of $e_h := \left( \sup_{0 \leq n \leq N} \mathbb{E} \left[ |X_{t_n} - Y_n|^2 \right] \right)^{\frac{1}{2}}$.
\\

\begin{figure}[htbp]
    \centering
    \begin{minipage}{0.49\textwidth}
        \centering
        \includegraphics[width=\textwidth]{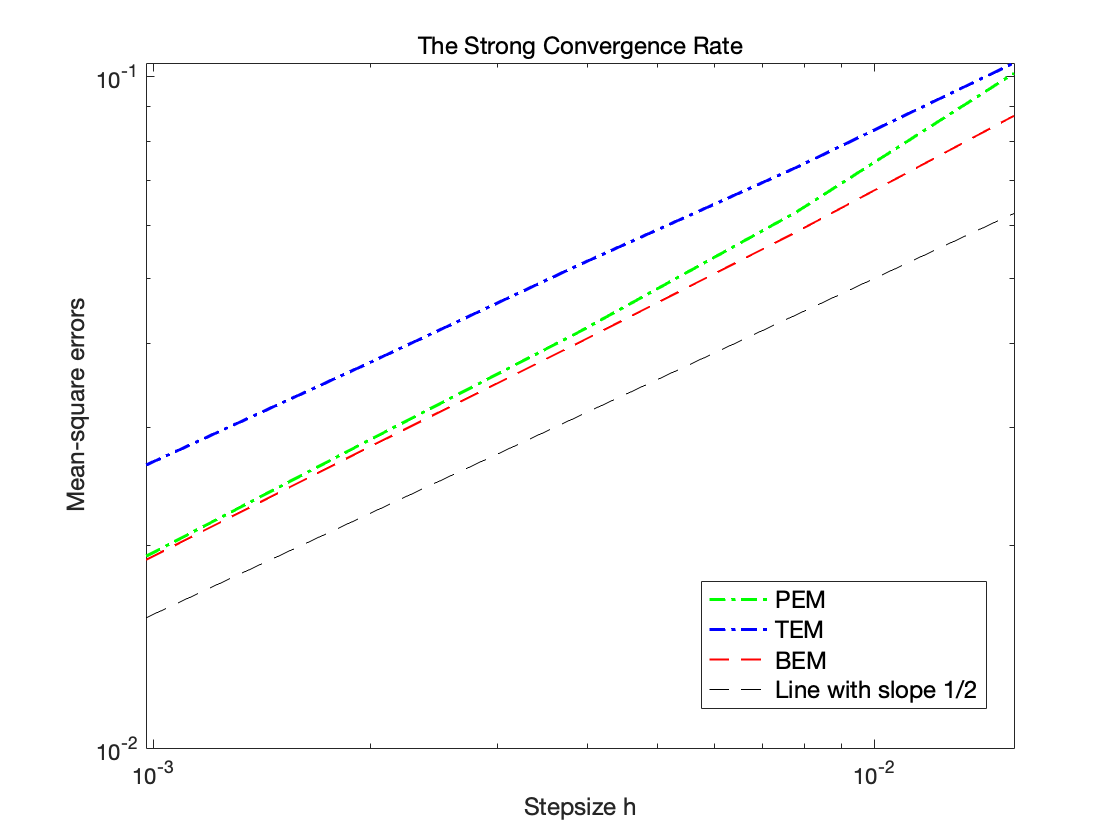} 
      	\caption*{\small{(a) Example 1: non-critical case}} 
        \label{results for case1_tamed}
    \end{minipage}
    \hfill 
    \begin{minipage}{0.49\textwidth}
        \centering
        \includegraphics[width=\textwidth]{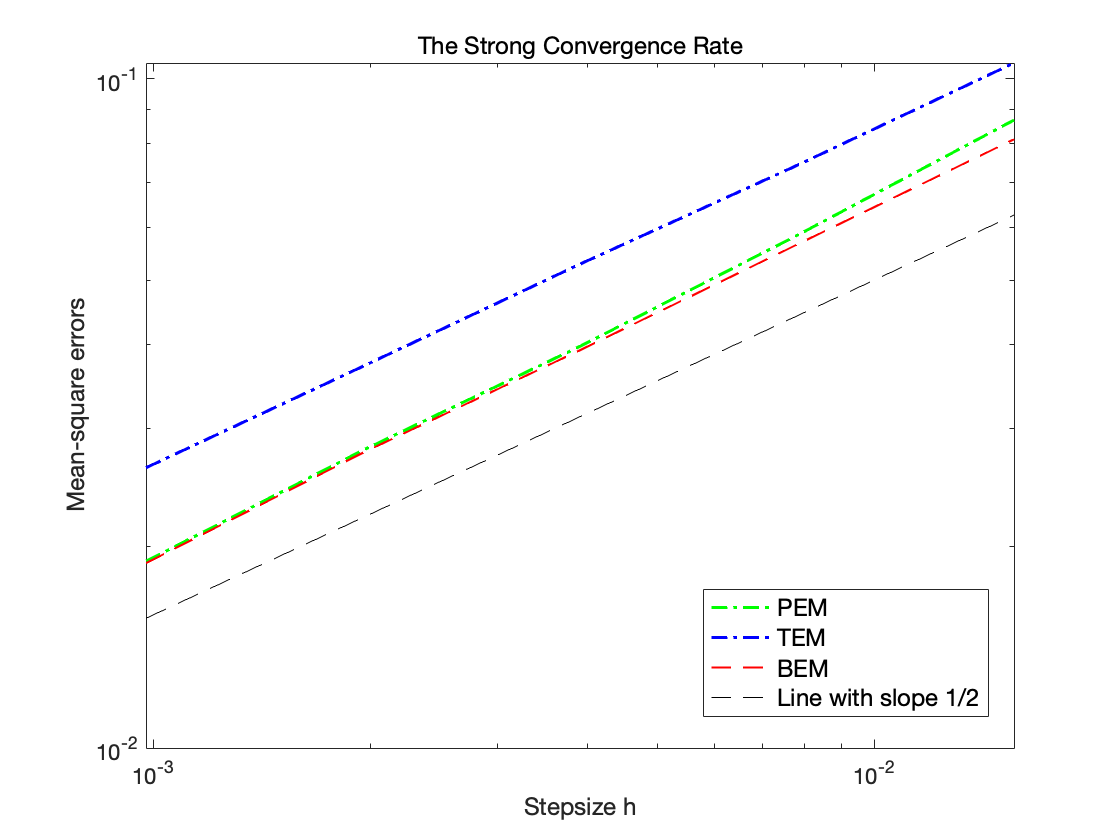} 
       \caption*{\small{(b) Example 2: critical case}}
        \label{results for case2_tamed}
    \end{minipage}
    \caption{Mean-square convergence rates}
    \label{fig:results}
\end{figure}

Fig.~\ref{fig:results} presents the mean-square convergence rates of TEM, PEM and BEM for both the non-critical case \( r + 1 > 2\rho \) and the critical case \( r + 1 = 2\rho \).
As shown, all errors are plotted on a log–log scale.
Compared with the reference lines, the numerical results demonstrate that all the three schemes achieve mean-square convergence rates close to $1/2$,
which can be detected more transparently from Table \ref{tab:least_squares_fit[0.5AS25]}.
This agrees with the theoretical results of Theorem \ref{thm(revised_last):Order_onehalf[0.5AS25]}.
It is worth noting that in the experiments of PEM, 
the projection mechanism was rarely triggered, since 
the positivity of the solution is largely maintained by the inherent structure of the scheme, rather than by frequent projections.
This reflects one of the key advantages of our scheme design.

Moreover, the time costs of TEM and BEM are reported in Table \ref{tab:time_costs[0.5AS25]}. 
The results clearly show that the proposed explicit strategy significantly reduce the computational cost compared to the implicit methods.
This advantage becomes particularly pronounced as the exponent $r$ in the drift term increases, leading to more expensive nonlinear equation solvers in the implicit scheme. 
These observations highlight the practical efficiency of our schemes.

\begin{table}[htp]
	\centering
	\setlength{\tabcolsep}{8mm}
        \caption{Time costs (seconds) for TEM and BEM over $10^4$ Brownian paths}\label{tab:time_costs[0.5AS25]}
	\begin{tabular}{c c c }
		\toprule[2pt]
		  &  TEM  & BEM \\
		\midrule 
           Example 1: & time = $18.150$  & time = $43.657$\\
		 Example 2: &  time = $17.988$ &  time = $37.003$ \\
		\bottomrule [2pt]
	\end{tabular}
	\vspace{2pt}
\end{table}

\begin{table}[htp]
	\centering
 \footnotesize
	\setlength{\tabcolsep}{7mm}
        \caption{A least square fit for the convergence rate $q$}\label{tab:least_squares_fit[0.5AS25]}
	\begin{tabular}{m{2cm}<{\centering} m{2.5cm}<{\centering} m{2.5cm}<{\centering} m{2.5cm}<{\centering} }
		\toprule[2pt]
		  & PEM & TEM & BEM \\
		\midrule 
		 Example 1 
         & $q=0.5923$, $resid=0.0579$ 
         & $q=0.4959$, $resid=0.0100$ 
         & $q=0.5473$, $resid=0.0156$
         \\
		 Example 2 
         & $q=0.5445$, $resid=0.0205$ 
         & $q=0.5014$, $resid=0.0059$ 
         & $q=0.5239$, $resid=0.0123$ 
         \\
		\bottomrule [2pt]
	\end{tabular}
	\vspace{2pt}
 \label{table1}
\end{table}

\section{Conclusion}

In this paper, we propose a new class of explicit schemes for approximating the generalized A\"it-Sahalia type model with Poisson jumps. The proposed schemes are shown to be unconditionally positivity-preserving (namely, no further restriction is imposed on the time step size $h>0$), and admitting a strong convergence rate of order $1/2$ for both the non-critical case $\gamma + 1 > 2 \rho$ and the critical case $\gamma + 1 = 2 \rho$. Also, we note that the analysis does not rely on any a priori moment bounds of the schemes. 
Finally,
numerical experiments are conducted to verify the theoretical results, where two concrete numerical methods are applied, as the examples of our proposed schemes.

\section*{Acknowledgements}
The authors would like to thank Prof. Xiaojie Wang for his insightful comments that helped improve this manuscript.

\vskip6mm
\bibliography{ref}

\end{document}